\title{Equivariant spaces \\ of matrices of constant rank}
\author{J.M. Landsberg, L. Manivel}
\date{\today}
\address{Department of Mathematics, Texas A\&M University, College Station, TX 77843-3368, USA}
\email{jml@math.tamu.edu}
\address{Institut de Math\'ematiques de Toulouse ; UMR 5219, Universit\'e de Toulouse \& CNRS, F-31062 Toulouse Cedex 9, France}
\email{manivel@math.cnrs.fr}
\theoremstyle{plain}
\newtheorem{theorem}{Theorem}
\newtheorem{conjecture}{Conjecture}
\newtheorem{prop}[theorem]{Proposition}
\newtheorem{lemma}[theorem]{Lemma}
\newtheorem{coro}[theorem]{Corollary}
\newtheorem{definition}[theorem]{Definition}
\newtheorem{question}[theorem]{Question}
\def\AA{{\mathbb{A}}}
\def\CC{{\mathbb{C}}}
\def\RR{{\mathbb{R}}}
\def\OO{{\mathbb{O}}}
\def\HH{{\mathbb{H}}}
\def\PP{{\mathbb{P}}}
\def\QQ{{\mathbb{Q}}}\def\ZZ{{\mathbb{Z}}}
\def\SS{{\mathbb{S}}}
\def\cO{{\mathcal{O}}}
\def\cE{{\mathcal{E}}}\def\Ext{{\mathcal Ext}}
\def\cF{{\mathcal{F}}}\def\cX{{\mathcal{X}}}
\def\cG{{\mathcal{G}}}\def\cK{{\mathcal{K}}}
\def\cL{{\mathcal{L}}}\def\cI{{\mathcal{I}}}
\def\cN{{\mathcal{N}}}\def\cR{{\mathcal{R}}}
\def\cS{{\mathcal{S}}}\def\cU{{\mathcal{U}}}
\def\cC{{\mathcal{C}}}\def\cQ{{\mathcal{Q}}}
\def\cV{{\mathcal{V}}}\def\cM{{\mathcal{M}}}
\def\ra{{\rightarrow}}
\def\lra{{\longrightarrow}}
\def\ft{{\mathfrak t}}\def\ff{{\mathfrak f}}\def\fs{{\mathfrak s}}
\def\fc{{\mathfrak c}}\def\fe{{\mathfrak e}}
\def\fg{{\mathfrak g}}\def\fp{\mathfrak{p}}\def\fk{\mathfrak{k}}
\def\fso{\mathfrak{so}}\def\fz{\mathfrak{z}}\def\fl{\mathfrak{l}}
\def\fsp{\mathfrak{sp}}
\def\fsl{\mathfrak{sl}}\def\fgl{\mathfrak{gl}}
\let\Iff\iff
\def\iff{\Leftrightarrow}
\def\om{\omega}
\newcommand\scalemath[2]{\scalebox{#1}{\mbox{\ensuremath{\displaystyle #2}}}}
\def\lau#1{\textcolor{green}{\;{\bf Laurent:} #1 {\bf }}}
\def\intprod{\negthinspace
\mathbin{\raisebox{.4ex}{\hbox{\vrule height .5pt width 4pt depth 0pt %
          \vrule height 4pt width .5pt depth 0pt}}}}
\def\trank{\text{rank}}
\def\bv{\mathbf{v}}
\def\BC{\mathbb C}\def\BS{\mathbb S}
\def\BP{\mathbb P}
\def\pp#1{\mathbb P^{#1}}
\def\fz{\mathfrak z}
\def\fgl{\mathfrak g\mathfrak l}
\def\pp#1{{\mathbb P}^{#1}}
\def\tdim{{\rm dim}}
\def\ww{\wedge}
\def\cI{{\mathcal I}}
\def\cE{{\mathcal E}}
\def\cF{{\mathcal F}}
\def\cG{{\mathcal G}}
\def\cR{{\mathcal R}}
\def\cS{{\mathcal S}}\def\cN{{\mathcal N}}
\def\cL{{\mathcal L}}
\def\cO{{\mathcal O}}
\def\CC{\mathbb C}
\def\RR{\mathbb R}
\def\HH{\mathbb H}
\def\AA{{\mathbb A}}
\def\OO{\mathbb O}
\def\ZZ{\mathbb Z}
\def\SS{\mathbb S}
\def\11{\mathbf 1}
\def\PP{\mathbb P}
\def\QQ{\mathbb Q}
\def\fs{{\mathfrak s}}
\def\fsl{{\mathfrak {sl}}}
\def\fsp{{\mathfrak {sp}}}
\def\fso{{\mathfrak {so}}}
\def\fe{{\mathfrak e}}
\def\ff{{\mathfrak f}}
\def\fz{{\mathfrak z}}
\def\fg{{\mathfrak g}}
\def\fp{{\mathfrak p}}
\def\fk{{\mathfrak k}}
\def\ft{{\mathfrak t}}
\def\fl{{\mathfrak l}}
\def\a{\alpha}
\def\s{\sigma}
\def\kk{\kappa}
\def\d{\delta}
\def\ot{{\mathord{ \otimes } }}
\def\op{{\mathord{\,\oplus }\,}}
\def\lra{{\mathord{\;\longrightarrow\;}}}
\def\ra{{\mathord{\;\rightarrow\;}}}
\def\dim{{\rm dim}\;}
\def\La#1{\Lambda^{#1}}
\def\cV{{\underline{V}}}
\def\fgl{\frak g\frak l}\def\fsl{\frak s\frak l}
\def\op{\oplus}
\def\ff#1{\Bbb F\Bbb F^{#1}}
\def\op{\oplus}
\def\ff{\mathfrak f}
\def\ul{\underline}
\def\s{\sigma}
\def\t{\tau}
\def\a{\alpha}
\def\fs{\mathfrak  s}
\def\fl{\mathfrak  l}
\def\fso{\frak{so}}
\def\BP{\mathbb  P}
\def\BC{\mathbb  C}
\def\pp#1{\mathbb  P^{#1}}
\def\cC{\mathcal  C}
\def\BS{\mathbb  S}
\def\fp{\mathfrak  p}
\def\ci{\mathcal  I}
\def\cQ{\mathcal  Q}
\def\fg{\mathfrak  g}
\def\hd{, \dotsc ,}
\def\La#1{\Lambda^{#1}}
\def\pp#1{\mathbb  P^{#1}}
\def\ur{\underline{\mathbf{R}}}
\def\ra{\rightarrow}
\def\tend{\operatorname{End}}
\def\tim{\operatorname{Im}}
\def\tdim{\operatorname{dim}}
\def\tker{\operatorname{ker}}
\def\tmod{\operatorname{mod}}
\def\thom{\operatorname{Hom}}
\def\trank{\operatorname{rank}}
\def\ww{\wedge}
\def\bbb{{\mathbf{b}}}
\def\be{\begin{equation}}
\def\ene{\end{equation}}
\def\aaa{{\mathbf{a}}}
\def\bbb{{\mathbf{b}}}
\def\ccc{{\mathbf{c}}}
\def\trank{\mathbf{R}}
\newcommand{\isom}{\cong}
\def\rank{\operatorname{rank}}
\def\cK{{\mathcal K}}
\def\trank{{\mathrm {rank}}}
\def\aaa{\mathbf{a}}
\def\bbb{\mathbf{b}}
\def\ccc{\mathbf{c}}
\def\bv{\mathbf{v}}
\theoremstyle{remark}
\newtheorem{remark}[theorem]{Remark}
\newtheorem{example}[theorem]{Example}
\begin{document}

\maketitle

\begin{abstract}
We  {use} representation theory  { to construct } spaces of matrices of constant
rank {.} These spaces are parametrized by the natural representation of the general 
linear group or the symplectic group. We  {present variants}   of this idea,  {with} more complicated 
representations,  {and others with} the orthogonal group. Our spaces of matrices correspond to vector bundles which are 
homogeneous  {but} sometimes   admit deformations to  {non-homogeneous} vector bundles, showing that 
 {these spaces of} matrices  {sometimes}  admit  {large} families of deformations.
\end{abstract}

\section{Introduction} 

It is a classical problem in both algebraic geometry and linear algebra  to construct linear spaces of matrices of 
constant rank (constant outside the origin).  
 {This problem is presented in the language
of algebraic geometry in} \cite{MR954659} and  {most
work on the problem since then has  used this language
and the tools it brings with it, including this paper}. 

There is a strong 
relationship with the study of vector bundles on projective space, another classical topic 
that attracted considerable attention. Indeed, a vector space of dimension $n+1$ of matrices
of size $a\times b$ can be seen as a matrix with linear entries, or equivalently as a 
morphism of sheaves $\psi:  {\cO^{\op a}_{\pp n}\lra \cO(1)^{\op b}_{\pp n}}$. If the rank is constant, equal to $r$, the image of this 
morphism is a vector bundle $\cE$ of rank $r$. Letting $\cK$ and $\cC$ denote the kernel and cokernel bundles,
we get the diagram below, 
where diagonals are short exact sequences. The vector bundle $\cE$ has very special properties, in particular:
\begin{itemize}
    \item $\cE$ and $\cE^\vee(1)$ are generated by global sections;
    \item as a consequence, $\cE$ is uniform, in the sense that its restriction to every line $L\subset\PP^n$ 
    splits in the same way:
    $$\cE_{|L}\simeq \cO_L(1)^{\oplus c_1(E)}\oplus \cO_L^{\oplus (r-c_1(E))}.$$
\end{itemize}

\smallskip
Uniform vector bundles have been classified up to rank $r\le n+1$: in this range, for $n\ge 3$ they are sums of 
line bundles and the tautological quotient bundle $\cQ$ or its dual (see \cite{MR3536970} and references therein; in that paper the same result is conjectured to hold for $n\ge 5$ and $r<2n$).   The general philosophy is that there should exist very few 
uniform vector bundles on $\PP^n$ of small rank, but they are easier to construct when the 
rank is large. Conversely, if $\cE$ is a rank $r$ vector bundle on $\PP^n$, such that 
$\cE$ and $ \cE^\vee(1)$ are generated by global sections (so that in particular $\cE$ is uniform), 
the natural morphism
$$\psi_\cE: H^0(\PP^n,\cE)\otimes\cO_{\PP^n}\longrightarrow H^0(\PP^n,\cE^\vee(1))^\vee\otimes\cO_{\PP^n}(1)$$
has constant rank $r$. 

$$\xymatrix{0\ar[rd] & & & & & & 0\\
& \cK \ar[rd] &&& & \cC\ar[ru] & \\
& &  {\cO^{\op a}_{\PP^n}}\ar[rr]^\psi\ar[rd] & & {\cO(1)^{\op b}_{\PP^n}}\ar[ru] & &\\
& & & \cE\ar[ru]\ar[rd]& && \\
 & &0\ar[ru] & & 0 & &}$$
 
\medskip\noindent {\it Examples}.
\begin{enumerate}
\item  If $\cE$ is a sum of line bundles, one gets what is called in \cite{MR587090}
a compression space. 

\item If $\cE=\cQ$, we have $H^0(\PP^n,\cE)=V$ and $H^0(\PP^n,\cE^\vee(1))^\vee=
\wedge^2V$. In this case the twist $$\psi(-1) : 
 {V \ot \cO_{\pp n}(-1)\lra \wedge^2V
\ot \cO_{\pp n}}$$ is the obvious vector bundle map which at 
$[v]\in\PP^n$ sends $w\otimes v$ to $w\wedge v$. More generally,   let $\cE=\wedge^p\cQ$ for some $p>0$, 
and we get the similar morphisms $\psi(-1) : 
 {\wedge^pV\ot\cO_{\pp n}(-1)\lra \wedge^{p+1}V
\ot \cO_{\pp n}}$ from which the Koszul complex 
is constructed. 

\end{enumerate}

 \medskip This last example is classical and dates at least back to Westwick \cite{MR878293}.
 The main goal of this paper is to  {present} a wide generalization by considering maps $\psi :  {U\ot \cO_{\BP V}(-1)\lra W\ot \cO_{\BP V}}$,
 where $U, W$ are $G$-modules (that we will suppose irreducible, for simplicity) of a classical group  {$G\subseteq GL(V)$}, 
 with a nonzero equivariant morphism $\Psi : U\otimes V\lra W$ and $V$ denotes the natural representation. 
 
 \medskip\noindent {\bf Theorem}. {\it If $G$ is a general linear group, or a symplectic group, 
 then $\psi$  defines a linear
 space of matrices of constant rank. 
 
 The associated bundle $\cE$ is homogeneous and  {admits an
 explicit description via   a module for the  
 parabolic subgroup preserving a highest weight line in $V$.} }

 \smallskip
 The bundle $\cE$ may arise from an arbitrarily long sequence of nontrivial extensions of explicit 
 completely reducible homogeneous bundles, so its structure can be quite complicated. 
 Being homogeneous, we expected it to be rigid, as is typically the case of $\cE=\wedge^p\cQ$
 and of the first cases we checked.  {However},  very quickly one  {obtains non-rigid} bundles, 
 whose deformation spaces can have  {large} dimension.  {For} the case we discuss in detail, we get vector 
 bundles on $\PP^n$ with $O(n^4)$  {dimensions
 worth of} moduli, see Proposition \ref{moduli}. As a consequence,  $\psi$ may 
 be deformed into non-equivariant  linear spaces of matrices of constant rank. 
 
  {\medskip
   We make the following   simple observation. 
 
\begin{prop}\label{constant}
If $G=GL(V)$ or $G=Sp(V)$,   and $M,N$ are irreducible $G$-modules with $V\subset M^\vee\ot N$, then the 
image of the morphism $\psi: V\lra Hom(M,N)$ is a linear space 
of   constant rank.
\end{prop}

\proof This immediately follows from the equivariance of the morphism, and   that $G$ acts 
transitively on $\PP(V)$. \qed 
 

}
 
 \medskip
 \noindent {\it Overview:} After a section of preliminaries, in section 3 
 we revisit Westwick's examples and 
  {give a representation-theoretic discussion of the problem. }
 In section 4, focusing on the general linear group  we discuss the structure of the 
 associated bundle $\cE$; we explain why we get a rigid bundle in a simple case, and  a bundle with a 
 very  {large} deformation space in a slightly more complicated situation. We also discuss how to extend  
  {Proposition \ref{constant}} beyond the natural representation. In section 5 we observe that it can be extended to a simple 
 statement that allows  {one}  to construct many spaces of matrices of constant rank starting from a given one. 
 Then we consider the case of the symplectic group, and    {explicitly construct} a six dimensional space of 
 $14\times 14$ matrices of rank nine. Finally we discuss what can be obtained for the orthogonal groups, 
 and show that  {one} can at least construct  { spaces of} bounded rank; either from the natural 
 representation in section 7, or the spin representations in section 8. 

\medskip 
The reader may have observed that our constructions are closely connected to the general theory of 
Steiner vector bundles on projective spaces \cite{MR1240599}. In
 {a subsequent paper we present   constructions of 
 Steiner   bundles giving rise to new classes
 of spaces of constant rank.}

\medskip\noindent {\it Acknowledgements}. Most of the results of this paper were obtained during  
 {Landsberg's} stay in Toulouse, funded by the LabEx CIMI. L. Manivel is also supported by the ANR project
FanoHK, grant ANR-20-CE40-0023.  {
Landsberg supported by NSF grant AF-2203618.} We warmly thank Rosa Miro Roig for useful comments. 


\section{Preliminaries}

\subsection{Notation}

We work exclusively over the complex numbers.

$V$ is a complex vector space of dimension $\bv=n+1$.
We give $V$ basis $v_0\hd v_{n}$ and dual basis $\a^0\hd \a^n$.

For $\pi=(p_1\hd p_{n+1})$ a non-increasing sequence of integers, 
$S_\pi V$ denotes the corresponding irreducible $GL(V)$-module.
If $\pi$ contains repeated entries, we use exponents to denote them, e.g.
for $(1,1,1,0,0,-1)$  we write $(1^3,0^2,-1)$.

For a vector space $W$, we let $\ul{W}:=W\ot \cO_{\BP V}$ denote the corresponding trivial vector bundle
on $\BP V$  when it is clear from the context which projective space we are taking
the  bundle over. 

If $\cE,\cF\subseteq \ul W$, then $\cE\cF\subset \ul{S^2W}$ denotes the image of the multiplication
map.

\subsection{Basic spaces}\label{basicsect}
There are several ways to avoid redundancies in the classification of spaces of bounded and
constant rank.

A classical and essentially understood class of spaces of bounded rank are the {\it compression spaces}, those
spaces of $\bbb\times \ccc$ matrices 
that in some choice of bases have a zero in the lower $\bbb-r_1\times \ccc-r_2$ block. Such spaces
are of bounded rank $r_1+r_2$. 
Compression spaces  can have constant rank, e.g.,
$$
\begin{pmatrix} 0& x_1&\cdots & x_k\\ x_1& & &\\ \vdots & & & \\ x_k & & &\end{pmatrix}
$$
but note this is a subspace of the direct sum of two spaces of bounded rank one. To avoid
this, call a space of the form 
$$
\begin{pmatrix} M& 0\\ 0 & M'\end{pmatrix}
$$
{\it split} and following \cite{MR954659}, say a space is {\it strongly indecomposable} if it is not the projection
$B'\ot C'\ra B\ot C$
of a split space of the same rank. A space that is strongly indecomposable 
has associated  $\cE$   indecomposable  \cite{MR954659}.


\subsection{Rank criticality}

Maximal spaces of matrices of constant, or more generally bounded  {rank}, are particularly interesting. 

\begin{definition}\cite{MR2268360}\label{rcdef} 
A space of bounded rank $V\subset \thom(B,C)$  is {\it rank-critical} if 
   any subspace of $\thom(B,C)$ that strictly contains $V$  contains morphisms of larger rank. 
\end{definition}

 Eisenbud and Harris \cite{MR954659} call rank critical spaces {\it unliftable}. 
Draisma gives an easily checked sufficient condition for rank criticality:
 for  $L\subset Hom(U,W)$  a linear space  of morphisms
of generic rank $r$, define  the space of {\it rank neutral directions} 
\begin{align*}RND(L):&=\{ B\in Hom(U,W), \; B(Ker(A))\subset Im(A) \; \forall A\in L, rank(A)=r\}\\
&=\bigcap_{A\in L, rank(A)=r} \hat T_A\sigma_r(Seg(\BP U\times\BP W)).
\end{align*}
Applying \cite[Prop. 8]{MR2268360}
to a slightly more general situation than \cite[Prop. 3]{MR2268360}, $RND(L)$ always contains $L$ and in case
of equality, $L$ is  rank critical.   If there is a group $G$ acting on the set up  and preserving $L$, then 
it must also preserve $RND(L)$.

\section{Equivariant morphisms of constant or bounded rank}

\subsection{A classical example revisited}
The prototypical equivariant morphisms of constant rank appear in the Koszul complex, as the 
morphisms  
$$\psi : V\lra Hom(\wedge^kV,\wedge^{k+1}V).
$$ 
  For any nonzero vector $v$, the kernel (resp.   image) of $\psi_v$ 
is $\wedge^{k-1}V\wedge v$ (resp. $\wedge^{k}V\wedge v$). In other words,  {let   $\cQ$ be the tautological quotient bundle
on $\BP V$,}  we have exact sequences 
$$\xymatrix{ \wedge^{k-1}\cQ(-1) \ar@{->}[rd] & & & & \wedge^{k+1}\cQ(1)\\
 &  \ul{\wedge^{k}V}  \ar@{->}[rd]\ar@{->}[rr]^{\psi} & & \ul{\wedge^{k+1}V}(1) \ar@{->}[ru] & \\
  & & \wedge^{k}\cQ  {.} \ar@{->}[ru] & & 
}$$
So $\psi$ has constant rank and the vector bundle $\cE=\wedge^k\cQ$ is uniform. 
Here we slightly abuse notation, using $\psi$ to denote both the map between vector bundles and the
inclusion of $V$ into a space of homomorphisms.

All this is well-known, but we   add the following observation:

\begin{prop}\label{rank-critical}
$\psi(V)$ is rank-critical.
\end{prop}
 
\proof We apply the results  of \cite{MR2268360} discussed in \S\ref{basicsect}.
Here $U=\wedge^kV$,  $W=\wedge^{k+1}V$ and $L=\psi(V)$, and we assume $k\leq \frac n2$
to avoid redundancy. 
Then $Hom(U,W)=U^\vee\ot W$ has the following $GL(V)$-module decomposition:
$$ Hom(U,W)=\bigoplus_{k\geq a\ge 0}S_{1^{a+1 },0^{n-2a},-1^{a}}V.
$$
   Since there are no multiplicities, we are reduced to proving that $S_{1^{a },0^{n-2a},-1^{a+1}}V$ cannot be contained in $RND(L)$ when 
$a>0$. Equivalently, we need to check that a highest weight vector in $S_{1^{a+1}0^{n-2a},-1^{a}}V$ cannot be contained in $RND(L)$. 
A highest weight vector is   given by
$$
\sum_{|I|=k-a, a< i_1< \cdots < i_{a-k}<n+1-a}
v_0\wedge\cdots \wedge v_a\ww v_I\otimes \a_I\ww \a_{n+1-a}\wedge\cdots \wedge \a_{n}
$$
Then $X\in \wedge^kV$ maps to
$$
\sum_{|I|=k-a, a\leq i_1< \cdots < i_{a-k}<n+2-a}
 [\a_I\ww \a_{n+1-a}\wedge\cdots \wedge \a_{n}(X)]v_0\wedge\cdots \wedge v_a\ww v_I 
$$
Take $v=v_n$ and $X=v_{n-k+1}\ww \cdots \ww v_n\in \tker\phi_v$, then
$$
X\mapsto v_0\wedge\cdots \wedge v_a\ww v_{n-k+1 }\ww \cdots \ww v_{n-a+1}
\not\in v_n\ww \La kV=\tim(\phi_v).
$$
This proves the claim.\qed 

\medskip
\subsection{The general equivariant case}\label{equivaring}

\begin{question}
 {Given} three $G$-modules $U,V,W$, and  $T\in (U\ot V\ot W)^G$,  when is
one of the three associated spaces  of constant, or simply,   bounded  rank? 
\end{question}

Another way two ask the same question  is:  Given $G$-modules $U,W$, for which submodules
$V\subset U\ot W$ is the corresponding space of bounded rank?

If one takes the Cartan component of $U\ot W$, that is, if $U$ has highest weight $\mu$ and $W$
highest weight $\nu$, the submodule of highest weight $\mu+\nu$, then the  resulting space is not of bounded rank.
Indeed, by the Borel-Weil theorem we can interpret our three representations as spaces of
global sections of certain line bundles on the complete flag variety $G/B$, and our morphism
is given by the pointwise product of such sections; in particular, it is always injective. 
Examples indicate that the  {submodules} of lowest highest weight in $U\ot W$ are good  {candidates.}

\section{General linear group} \label{glsect}
\subsection{General remarks about embeddings $V\ra \thom(S_\mu V,S_\nu V)$.}
The irreducible representations of $GL(V)$ are the Schur modules $S_\mu V$, where $\mu$ can be 
supposed (after twisting by some character if necessary) to be a partition $\mu=(\mu_1,\ldots, \mu_\bv)$, 
with $\mu_1\ge \cdots \ge \mu_\bv\ge 0$ (in fact one may also assume $\mu_\bv=0$ but it
will be convenient not to impose this). 
 
By the Pieri  rule, $S_\nu V$ is contained in $S_\mu V\otimes V$  if and only if   $\nu=(\mu_1,\ldots,\mu_k+1, \ldots, \mu_\bv)$
for some integer $k$ such that $\mu_{k-1}>\mu_k$. 
The diagram of $\nu$ is then obtained 
by adding one box $b$ to the diagram of  {$\mu$}  at the extremity of the $k$-th row, as in the diagram below. 
For future use we denote the box immediately north of $b$  by $c$, if there is one. 

 \[   \young(~~~~~~,~~c~,~~b,~~,~)\]

Again by the  Pieri  rule, there is a unique  (up to scale) 
equivariant morphism
$$\phi : V \lra Hom(S_\mu V, S_\nu V).$$
Once we fix a nonzero vector $v\in V$, or the line $\ell=\CC v\subset V$, the image and the kernel of 
$\phi_v$ are preserved by the action of the stabilizer of $\ell$ in $GL(V)$, which is a parabolic subgroup $P$.
So we need to describe the $P$-module structure of $ S_\mu V$ and $ S_\nu V$.  First,   consider the action of the unipotent radical $P_u\subset P$, which is the subgroup 
acting trivially both on $\ell$ and on $V/\ell$; its (abelian) Lie algebra is   $\fp_u=Hom(V/\ell,\ell)
\subset End(V)$.
The action of $\fp_u$ on $S_\mu V$ has a nontrivial kernel $M_1$, and by induction one obtains  a canonical
filtration 
$$0=M_0\subset M_1\subset M_2\subset \cdots \subset M_m=S_\mu V,$$
such that $\fp_u(M_k)\subset M_{k-1}$. Consequently, the action of $P_u$ on each quotient $M_k/M_{k-1}$ is trivial, 
and the $P$-module structure of such a quotient is fully determined by its $L$-module structure, for $L$
a Levi factor of $P$. Concretely, fixing $L$ amounts to choosing a hyperplane $H$ in $V$ complementing
$\ell$, and then $L=GL(H)\times GL(\ell).$ As an $L$-module, $V=H\oplus \ell$ and the filtration of $S_\mu V$ 
that we have defined has associated grading determined by the $\ell$ degree in the
summands of   $S_\mu(H\oplus \ell)$. 

The decomposition of the latter $L$-module   may be described as follows:
\be\label{mudecomp}
S_\mu(H\oplus\ell)=\bigoplus_{k\ge 0}\bigoplus_{\mu\stackrel{k}{\ra}\alpha} S_{\alpha}H\otimes \ell^k,
\ene
where the symbol $\mu\stackrel{k}{\ra}\alpha$ means that the diagram of $\alpha$ can be obtained by deleting 
$k$ boxes from the diagram of $\mu$, deleting at most one box  per column.

 \[   \young(~~~~**,~~~*,~~,~*,~)\]
  In particular, it is important to notice
that this decomposition has no multiplicities bigger than one. 

There is a similar decomposition for $S_\nu V$: 
$$S_\nu(H\oplus\ell)=S_\nu H\oplus\bigoplus_{k\ge 0}\bigoplus_{\mu\stackrel{k+1}{\ra}\beta} S_{\beta}H\otimes \ell^{k+1}.$$
The morphism $\phi_v$ is just multiplication by $v\in\ell$, so it sends a component 
$S_{\alpha}H\otimes \ell^k$ of $S_\mu V$ to $S_{\alpha}H\otimes \ell^{k+1}$ if this is a component 
of $S_\nu V$, and to zero otherwise. In particular the kernel of $\phi_v$ is the direct sum of the 
components $S_{\alpha}H\otimes \ell^k$ of $S_\mu V$ such that  $S_{\alpha}H\otimes \ell^{k+1}$ 
does not appear in $S_\nu V$. 

When does this happen? Recall that $\alpha$ is obtained by erasing $k$ boxes from $\mu$, at most one per column. Moreover, $\nu=\mu\cup \{b\}$ for one box $b$. So to obtain $\alpha$ from $\nu$, one needs to erase two boxes from $\mu$ in the 
same column exactly when the box $c$ immediately north to $b$ does not belong to $\alpha$; we need to erase 
both $b$ and $c$. 

Similarly, the cokernel of $\phi_v$ is, as an $L$-module, the sum of the factors $S_{\beta}H\otimes \ell^{k}$ such that the box $b$ belongs to $\beta$. In summary: 

\begin{prop}\label{kerimcoker}
As $L$-modules, the kernel, image and cokernel of $\phi_v$ are:
$$\begin{array}{rcl}
Ker(\phi_v) & = & \bigoplus_{k\ge 0}\bigoplus_{\mu\stackrel{k}{\ra}\alpha, c\notin\alpha} S_{\alpha}H\otimes \ell^k,\\
Im(\phi_v) & = & \bigoplus_{k\ge 0}\bigoplus_{\mu\stackrel{k}{\ra}\alpha, c\in\alpha} S_{\alpha}H\otimes \ell^{k+1},\\
Coker(\phi_v) & = & \bigoplus_{k\ge 0}\bigoplus_{\nu\stackrel{k}{\ra}\beta, b\in\beta} S_{\beta}H\otimes \ell^{k}.
\end{array}$$
\end{prop}

\begin{coro}\label{injcor}
The morphism $\phi_v$ is injective (resp. surjective) exactly when the box $b$ belongs to the first row
(resp. the $\bv$-th row). 
\end{coro}


\subsection{
  Example:
  $\mu=(2)$ and $\nu=(2,1)$}\label{mu2nu21}

Given a decomposition $V=H\oplus \ell$ as above, we get
$$\begin{array}{ccl}
S_{2}V & = &  S_{2}H \quad \oplus  H\otimes \ell \quad \oplus  \ell^2, \\
S_{21}V & = & S_{21}H \quad \oplus  S^2 H\otimes \ell \quad \oplus  \La 2 H\otimes \ell\quad \oplus  H\otimes \ell^2.
\end{array}$$
Thus the kernel of $\psi$ is isomorphic to $\ell^2$, the image to $S_2H\otimes\ell \oplus H\otimes\ell^2$ and the 
cokernel to $S_{21}H\oplus \wedge^2H\otimes\ell$. 
This gives a matrix $\psi$ of linear forms in $n+1$ variables, of size $a_n\times b_n$ and constant rank $r_n$, where
$$a_n=\frac{(n+2)(n+1)}{2} ,\quad b_n=\frac{ n(n+1)(n+2)}{3}, \quad r_n=\frac{n^2+3n}{2}.$$

\begin{prop}
$\psi(V)$ is not rank-critical. 
\end{prop}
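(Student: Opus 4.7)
The plan is to exhibit an explicit element $B\in\Hom(S_2V,S_{21}V)$ not lying in $\psi(V)$ such that every linear combination $a\phi_v+bB$, with $(a,b)\in\CC^2\setminus\{0\}$, still has rank at most $r_n=n(n+3)/2$; equivalently, to produce a 4-dimensional subspace of $\Hom(S_2V,S_{21}V)$ containing $\psi(V)$ and of maximum rank $r_n$.

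First I would decompose $\Hom(S_2V,S_{21}V)=S_2V^*\otimes S_{21}V$ as a $GL(V)$-module using Pieri and Littlewood--Richardson rules. The natural representation $V$ appears in this decomposition with multiplicity exactly one, and this unique copy is precisely $\psi(V)$; the remaining summands supply a pool of candidate directions for the extension. Any $GL(V)$-invariant extension must have the form $\psi(V)\oplus M$ for $M$ one of the other irreducible summands.

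Next I would impose the first-order (linearized) rank-neutral condition $B(v^2)\in \tim(\phi_v)$ for every $v\in V$, which by \cite[Prop.~8]{MR2268360} characterizes $RND(\psi(V))$. Proposition~\ref{kerimcoker} gives $\tim(\phi_v)=S_2H\otimes \ell\oplus H\otimes \ell^2$, so globally the condition becomes the vanishing of the composition $\cO(-2)\hookrightarrow \underline{S_2V}\stackrel{B}{\lra}\underline{S_{21}V}(1)\lra \cC$, where $\cC$ is the cokernel bundle whose associated graded is described by Proposition~\ref{kerimcoker}; equivalently, $B$ should lift through $\cE\subset \underline{S_{21}V}(1)$ along the Veronese subbundle. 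By equivariance this cuts out a $GL(V)$-invariant subspace of $\Hom(S_2V,S_{21}V)$, so one reduces to checking whether any non-$V$ irreducible summand of $\Hom(S_2V,S_{21}V)$ lies inside $RND(\psi(V))$.

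Finally, having identified a candidate $B$, one must promote the first-order condition to a global rank bound: that every $\phi_v+tB$ really has rank at most $r_n$, not merely up to first order. By $GL(V)$-equivariance this reduces, for general $n$, to checking the bound on a single representative orbit, or equivalently to showing that the $(r_n+1)\times(r_n+1)$ minors of the pencil $\phi_v+tB$ vanish identically as polynomials in $(v,t)$. The main obstacle is precisely this promotion from the first-order to the global statement, since Draisma's criterion is only sufficient for rank-criticality; having $RND(\psi(V))\supsetneq\psi(V)$ alone does not produce a lift. One must therefore construct $B$ explicitly from a non-$V$ summand of the equivariant decomposition and verify the bound on the pencil by direct computation, easiest to handle first in the case $n=2$ of $6\times 8$ matrices of rank $5$ and then propagate to general $n$ by equivariance.
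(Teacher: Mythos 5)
Your strategy is the right one and matches the paper's: decompose $\Hom(S_2V,S_{21}V)$ into $GL(V)$-irreducibles, compute $RND(\psi(V))$ term by term, detect that it strictly contains $\psi(V)$, then pick an extra direction and verify by hand that the enlarged space still has bounded rank. You also correctly flag the crucial point that $RND(L)\supsetneq L$ is only a first-order statement and does \emph{not} by itself prove $L$ fails to be rank-critical — Draisma's criterion is a one-way implication — so some global verification is unavoidable.

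Where your sketch stops short is that it never actually carries out either step. You do not identify which summand of $\Hom(S_2V,S_{21}V)$ enters $RND$ (the paper finds it is $S_{2,0^{n-1},-1}V$, so $RND(L)=L\oplus S_{2,0^{n-1},-1}V$), you do not write down the candidate $B$, and you leave the global rank check as an unstructured minor computation. More importantly, your proposed route for the global check — verify the $(r_n+1)\times(r_n+1)$ minors vanish for $n=2$ and then ``propagate to general $n$ by equivariance'' — doesn't quite make sense: $GL(V)$-equivariance moves you within the orbit for a fixed $n$, it doesn't transport the statement across different $n$. The paper instead exhibits a single closed-form kernel vector, valid for all $n$: taking $\sigma=e^2\otimes\alpha$ (a highest weight vector in $S_{2,0^{n-1},-1}V$, with $\alpha(e)=0$), one checks directly that $\alpha(v)e^2-v^2$ lies in $\ker(\psi(v)+\sigma)$ for every $v$, so $\psi(v)+\sigma$ is never injective. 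Since the source of the pencil has dimension $r_n+1$, this gives the uniform bound without any minor computation. That explicit kernel vector is the missing ingredient your outline needs in order to close the gap you correctly identified.
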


\proof First observe that Draisma's criterion does not apply by computing $RND(L)=L$ for $L=\psi(V)$. 
Recall that $RND(L)$ must be a submodule of
$$Hom(S_2V,S_{21}V)=S_{2,1,0^{n-2},-2}V\oplus S_{1,1,0^{n-2},-1}V\oplus S_{2,0^{n-1},-1}V\oplus V.$$
It suffices to consider their highest weight vectors, and the result of a straightforward 
computation is that $RND(L)=L\oplus S_{2,0^{n-1},-1}V$. This 
 suggests   considering $M=\langle L,\sigma\rangle$ 
for $\sigma$ a highest weight vector in $S_{2,0^{n-1},-1}V$. This is a tensor of the form $\sigma=e^2\otimes \alpha$
for $\alpha\in V^\vee$ a linear form vanishing on $e$. It sends $v^2$ to $\alpha(v)(e\wedge v)\otimes e=\alpha(v)\psi(v)$ $(e^2)$  which belongs to  {$Im(\psi(v))$}, so $\sigma$ belongs to $RND(L)$.  {We claim  } that $\psi(v)+\sigma$ is 
never injective. Indeed,  $\alpha(v)e^2-v^2$ is contained in its kernel. \qed 

\medskip It is plausible  {that} $\psi(V)$ is constant-rank-critical, in the sense
 that a bigger space of matrices will never have constant rank. Indeed having constant rank is not a closed condition, contrary to having bounded rank, so this does not follow from the previous discussion. 
  {For example, over $V\op \langle \s\rangle$, when 
 $n=2$ the map at $\s$ has rank $2<5$.}

\medskip\noindent {\it Remark.} 
When $n=2$ one obtains a three  dimensional space of $6\times 8$ matrices of constant
rank $5$.  Since   $4=8-5+1$ does not divide $5=5!/4!$, the maximum possible dimension of  such a space
is four  {\cite{MR878293}. }

\medskip
More  {intrinsically,}  the kernel bundle is  the  homogeneous line  bundle $\cO(-2)$, 
so in particular $c_1(\cE)=2$. 
The image bundle $\cE$ and the cokernel bundle $\cC$ fit into  short exact sequences  
$$0\rightarrow \cQ(-1)\rightarrow \cE\rightarrow S^2\cQ\rightarrow 0,\qquad 
0\rightarrow \wedge^2\cQ\rightarrow \cC\rightarrow S_{21}\cQ(1)\rightarrow 0.
$$
The situation can be summarized in the following diagram:

$$\xymatrix{\cO(-2) \ar@{->}[rd] & & & & \cC \\
 &  \ul{S^2V}  \ar@{->}[rd]\ar@{->}[rr]^{\psi} & & \ul{S_{21}V}(1) \ar@{->}[ru] & \\
  & & \cE  \ar@{->}[ru] & & 
}$$

\smallskip
An easy application of Bott's theorem  {(see, e.g.,
\cite{weyman}), recalling that $Ext^1(\cE,\cF)=H^1(\cE^*\ot \cF)$,  } {shows:}

\begin{lemma}\label{lem8}
As a $\fgl_n$-module,  $Ext^1(S^2\cQ, \cQ(-1))\simeq \fgl_n$. 
\end{lemma}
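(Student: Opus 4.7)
The plan is to translate the Ext group into sheaf cohomology, decompose the relevant bundle into $GL(V)$-equivariant irreducibles, and compute each summand via Borel--Weil--Bott. Since $Ext^1(S^2\cQ, \cQ(-1)) = H^1(\BP V, S^2\cQ^\vee \otimes \cQ(-1))$, I begin by decomposing the bundle. The Pieri rule, applied fiberwise to the $GL_n$-module $S^2 U^* \otimes U$ (with $U$ the fiber of $\cQ$), gives the $GL(V)$-equivariant splitting
$$ S^2\cQ^\vee \otimes \cQ \;\simeq\; S_{(1, 0^{n-2}, -2)}\cQ \;\oplus\; \cQ^\vee,$$
corresponding to the traceless part and the trace (the contraction $S^2 U^* \otimes U \to U^*$).

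For the trace summand $\cQ^\vee(-1)$, I twist the dual Euler sequence $0 \to \cQ^\vee \to V^* \otimes \cO \to \cO(1) \to 0$ by $\cO(-1)$, obtaining $0 \to \cQ^\vee(-1) \to V^* \otimes \cO(-1) \to \cO \to 0$; the resulting long exact sequence, together with $H^\bullet(\cO(-1)) = 0$ on $\BP^n$ and $H^0(\cO) = \CC$, gives $H^1(\cQ^\vee(-1)) \simeq \CC$ (the trivial representation) and vanishing in all other degrees. For the traceless summand $S_{(1, 0^{n-2}, -2)}\cQ(-1)$, I apply Bott's theorem: identify the $P$-weight $\lambda$ of this irreducible equivariant bundle (incorporating the $\cO(-1)$ twist), form $\lambda + \rho$, and sort it into the dominant chamber. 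A single transposition suffices, and reading off $\sigma(\lambda+\rho)-\rho$ produces
$$ H^1\bigl(S_{(1,0^{n-2},-2)}\cQ(-1)\bigr) \;\simeq\; S_{(1, 0^{n-1}, -1)}V,$$
the adjoint representation, with all other cohomology vanishing.

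Summing the two contributions identifies $Ext^1(S^2\cQ, \cQ(-1)) \simeq \CC \oplus S_{(1, 0^{n-1}, -1)}V \simeq \fgl_n$ as a $\fgl_n$-module, as claimed. The main technical subtlety is careful bookkeeping with Bott's theorem: one must identify correctly the equivariant $P$-weight of the Schur-functor bundle $S_{(1, 0^{n-2}, -2)}\cQ$, account for the interaction with the $\cO(-1)$ twist, and handle the identification $\det\cQ \simeq \cO(1)$ (up to a character of $GL(V)$) that appears when one reduces weights with negative entries to ordinary partitions. Alternatively, one can bypass Bott entirely by iterating symmetric powers of the dual Euler sequence and chasing long exact sequences of cohomology; the required vanishings and Weyl-group combinatorics are standard once the conventions are fixed.
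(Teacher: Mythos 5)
Your proof is correct and carries out precisely the Bott computation the paper points to without spelling out: you split $S^2\cQ^\vee\otimes\cQ$ into the irreducible equivariant summands $S_{(1,0^{n-2},-2)}\cQ\oplus\cQ^\vee$, treat the trace piece via the twisted dual Euler sequence (noting $\cQ^\vee(-1)=\Omega^1_{\BP V}$, whose only cohomology is $H^1=\CC$), and treat the traceless piece by Bott, obtaining $\fsl(V)\oplus\CC\cong\fgl(V)$. One small remark your explicit computation makes visible: with $\tdim V=n+1$, the answer is $\fgl(V)=\fgl_{n+1}$, so the subscript ``$n$'' in the lemma's statement is a minor slip in the paper's notation rather than an issue with your argument.
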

 
  {Lemma \ref{lem8}} suggests that $\cE$ might be deformed by changing the extension class, but this is not the case:

\begin{prop}
$\cE$ is rigid. 
\end{prop}

\begin{proof} Suppose $\cE_t$  were   a small deformation of $\cE_0=\cE$. Since $h^q(\cE)=0$ for any $q>0$, 
by semi-continuity this also holds for $\cE_t$ for $t$ close to zero, and therefore $h^0(\cE_t)=h^0(\cE)=6$.
Moreover the evaluation morphism $\underline{H^0(\cE_t)}\ra\cE_t$ remains surjective since this is an open 
condition. The kernel is a line bundle and  must be $\cO(-2)$ since the first Chern class of $\cE_t$ must be constant. The dual exact sequence 
$$0\ra \cE_t^\vee \ra \underline{H^0(\cE_t^\vee)}\ra \cO(2)\ra 0$$
induces  { a}  morphism $H^0(\cE_t)\ra H^0(\cO(2))$ which must be an isomorphism for $t$  { sufficiently small} since 
it is for $t=0$. Once we have identified these two spaces, we get the exact sequence whose kernel is 
$\cE^\vee$, which is thus isomorphic with $\cE_t^\vee$. 
\end{proof}

Recall the {\it slope} of a coherent sheaf $\cF$ is $\mu(\cF):=c_1(\cF)/\trank(\cF)$, and
 that by definition a vector bundle $\cE$ is {\it stable}
  if for all proper coherent subsheaves $\cF\subset \cE$ one has $\mu(\cF)<\mu(\cE)$.

\begin{prop}
$\cE$ is stable. 
\end{prop}

\proof 
The main result of \cite{MR1104341} states that a homogeneous vector bundle $\cE$ is stable if and only
if for all homogeneous subbundles $\cF\subset \cE$ one has $\mu(\cF)<\mu(\cE)$. In our case, 
such a subbundle yields a morphism $\cF\ra S^2\cQ$ which by Schur's lemma must be zero or surjective. 
In the first case $\cF$ is a subbundle of $\cQ(-1)$, and again by Schur's lemma $\cF$ being nonzero 
must coincide with $\cQ(-1)$; it is then easy to check that $\mu(\cF)<\mu(\cE)$. In the second case the 
kernel of the surjective map  $\cF\ra S^2\cQ$
must be a proper homogeneous subbundle of $\cQ(-1)$, so for the same reasons it must be zero and 
$\cF\simeq S^2\cQ$ yields a splitting of $\cE$, a contradiction. \qed 

\medskip
This confirms the expectation that this example yields an isolated point (up to change of basis) 
in the variety parametrizing matrices of constant rank. 
A straightforward computation gives the following $8\times 6$-matrix
 {where
the rows are respectively labeled by
$e_0^2, e_1^2,e_2^2, 2e_0e_1,2e_0e_2,2e_1e_3$
and the columns labeled by
$e_0\ww e_1\ot e_0, e_0\ww e_2\ot e_0, 
e_0\ww e_1\ot e_1,e_0\ww e_2\ot e_2,e_1\ww e_2\ot e_1,e_1\ww e_2\ot e_2,e_0\ww e_1\ot e_2+e_0\ww e_2\ot e_1,
e_0\ww e_1\ot e_2-e_1\ww e_2\ot e_0
$:  
$$\begin{pmatrix}
-y & -z & 0 & 0  &0 &0 &0 &0   \\
0  &0 &x &0 &-z &0 & 0& 0 \\
0   &0 & 0 & x& 0& y& 0& 0  \\
x    &0 &-y &0 &0 &0   &-z &z \\
0     &x &0 & -z& 0&0   & 0& -y\\
0      &0 &0 &0   &y &-z &x &0  
\end{pmatrix}.$$
}
We summarize the information about the low rank example:

\begin{prop}\label{newr5} The inclusion $\BC^3\subset \thom(S_2\BC^3, S_{21}\BC^3)$ 
is a  space of constant rank five  $6\times 8$-matrices of dimension $3$ that is rank critical and strongly indecomposable.
 The associated
vector bundles have first Chern classes $c_1(\cE)=2$, $c_1(\cE^\vee(1))=3$.
The associated space has no  nontrivial deformations.
The associated tensor does not have minimal border rank.
\end{prop}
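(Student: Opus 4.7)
The proposition collects several independent claims; I would address them in turn, leveraging earlier results.

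\emph{Dimensions, constant rank, and Chern classes.} Substituting $n=2$ in the formulas of Section~\ref{mu2nu21} gives source dimension $6$, target dimension $8$, generic rank $5$, with $\dim \BC^3 = 3$; constant rank follows from Proposition~\ref{constant} (alternatively from the 1-dimensionality of $\ker\psi_v \simeq \ell^2$). For Chern classes, on $\PP^2$ the rank-2 quotient bundle $\cQ$ has $c_1(\cQ) = 1$, so $c_1(\cQ(-1)) = -1$ and $c_1(S^2\cQ) = 3$; the short exact sequence $0 \to \cQ(-1) \to \cE \to S^2 \cQ \to 0$ yields $c_1(\cE) = 2$ and $c_1(\cE^\vee(1)) = -c_1(\cE) + \trank(\cE) = -2 + 5 = 3$.

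\emph{Strong indecomposability} follows from stability of $\cE$ (already proven): stable bundles are simple, hence indecomposable; a presentation of $\psi(V)$ as a projection of a split space of equal generic rank would force a direct-sum decomposition of the associated image bundle, contradicting simplicity of $\cE$.

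\emph{Rank criticality}, understood in the constant-rank sense (in keeping with the plausibility remark preceding this proposition), is the assertion that every one-dimensional extension $L \subsetneq L + \langle\tau\rangle$ fails constant rank. By $GL_3$-equivariance the problem reduces to a case analysis over the three irreducible complementary summands of
$$\thom(S_2V, S_{21}V) = L \oplus S_{2,1,-2}V \oplus S_{1,1,-1}V \oplus S_{2,0,-1}V,$$
since for each $\tau$ one may replace it, without loss of generality, by a highest weight vector of the summand in which it lies. The $S_{2,0,-1}V$ case was already handled in the remark above: the highest weight vector there has rank $2$, which immediately breaks constant rank. For the other two summands one writes down a highest weight vector explicitly and verifies either that its rank is less than $5$, or, should it equal $5$, that some combination $\psi(v) + \tau$ has rank different from $5$.

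\emph{Rigidity and border rank.} Rigidity of the matrix space reduces to rigidity of $\cE$, already proven: a first-order constant-rank-preserving deformation of $\psi(V)$ induces a deformation of $\cE$, which must be trivial since $Ext^1(\cE,\cE)=0$, and the semicontinuity argument on global sections used in the rigidity proof then recovers $\psi(V)$ up to change of basis. For the border rank claim, the minimum value for a concise tensor in $\BC^3 \otimes \BC^6 \otimes \BC^8$ is $8$; I would rule this out via the depth-$1$ Koszul flattening $T_A^{\wedge 1}: S_2V \otimes V^\vee \to \wedge^2 V^\vee \otimes S_{21}V$, whose rank is bounded above by $\binom{2}{1}\tbrank(T) = 2\,\tbrank(T)$. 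The source decomposes under $GL_3$ as $S_2V \otimes V^\vee \cong S_{(2,0,-1)}V \oplus V$, and the target as $\wedge^2 V^\vee \otimes S_{21}V \cong S_{(2,0,-1)}V \oplus S_{(1,1,-1)}V \oplus V$; by Schur's lemma the equivariant flattening is determined by two scalars, one for each matching summand. Checking that both are nonzero by an explicit computation on highest weight vectors yields flattening rank $18$, whence $\tbrank(T) \geq 9 > 8$. The main technical obstacle is pinning down the nonvanishing of these two Schur scalars.
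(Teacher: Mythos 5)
Your treatment of the dimension count, constant rank, Chern classes, rigidity, and strong indecomposability follows the same route the paper takes: these are all consequences of the bundle analysis, stability proposition, and rigidity proposition in \S\ref{mu2nu21}. One small caveat: \S\ref{basicsect} only states the implication from strong indecomposability of the space to indecomposability of $\cE$; you invoke the converse (from \cite{MR954659}), which is what is actually needed here, but the paper leaves it implicit.

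The rank criticality clause is where things go sideways — and you have, correctly, smelled a problem. Just before Proposition~\ref{newr5} the paper \emph{proves} the opposite: it establishes $RND(\psi(V)) = \psi(V) \oplus S_{2,0^{n-1},-1}V$ and exhibits a four-dimensional extension $M = \psi(V)\oplus\langle\sigma\rangle$ ($\sigma$ a highest weight vector of $S_{2,0^{n-1},-1}V$) that is still of bounded rank $\le 5$; hence $\psi(V)$ is \emph{not} rank-critical in the sense of Definition~\ref{rcdef}. The paper then says it is merely ``plausible'' that $\psi(V)$ is constant-rank-critical, offering only the single observation that $\sigma$ itself has rank $2<5$. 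So the phrase ``rank critical'' in the proposition contradicts the paper's own earlier proposition, and the constant-rank version is stated by the paper as unproven. Your reading of it as constant-rank criticality is the charitable one, and your plan of attack (extend by one direction, reduce to highest weight vectors, show the rank drops) is a reasonable start, but it has a genuine gap: a general one-dimensional extension direction $\tau$ has components in several of the four irreducible summands of $\thom(S_2V,S_{21}V)$, and the $GL_3$-orbit closure of such a $\tau$ need not contain a highest weight vector of a single summand, so the case analysis as you describe it does not cover the general $\tau$. Since the paper does not supply a proof of this clause at all, this is less a deviation from the paper's proof than an attempt to fill a hole the paper leaves open.

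For the border rank claim, you and the paper use the same Koszul flattening $V^*\otimes S_2V \to \La 2 V^*\otimes S_{21}V$ and the same $GL_3$-equivariance, but your articulation is sharper. You decompose source $\cong S_{(2,0,-1)}V\oplus V$ and target $\cong S_{(2,0,-1)}V\oplus S_{(1,1,-1)}V\oplus V$, reduce by Schur's lemma to two scalars, and note the possible ranks are $0,3,15,18$; showing both scalars nonzero gives rank $18$ and $\tbrank\ge 9$. The paper instead argues ``rank $\ge 12$ by conciseness, and since neither summand of the source has dimension $12$, the flattening is full rank.'' As written that does not rule out rank $15$ (kernel equal to the summand $V$), which would only give $\tbrank\ge 8$ and not the claim. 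Your reformulation isolates exactly what must be checked, though — like the paper — you stop short of the explicit verification on highest weight vectors. This is the one place where your proposal, while morally the same as the paper's, is a cleaner account of what the proof actually requires.
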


All the assertions have been proven except the last. To prove it, consider the Koszul flattening
\cite{MR3376667}
  $V^*\ot S_2V\ra \La 2 V^*\ot S_{21}V$. This is a $GL(V)$-module map. It
it must have rank at least $12$ because the tensor is concise, but neither of the two irreducible modules
in the source has dimension $12$, so it must be full rank and thus the border rank of the tensor
must be  at least $9$.

More generally, if $\mu=(a)$ and $\nu=(a,1)$, one obtains a space of constant corank one with $c_1(\cE)=a$.

\subsection{Example: 
  $\mu=(2,2)$ and $\nu=(2,2,1)$}

Decomposing $V=H\oplus \ell$ as above, we get
$$\begin{array}{ccl}
S_{22}V & = &  S_{22}H \quad \oplus  S_{21}H\otimes \ell \quad \oplus  S_{2}H\otimes \ell^2, \\
S_{221}V & = & S_{221}H \quad \oplus  (S_{22}H\oplus S_{211}H)\otimes \ell\quad \oplus  S_{21}H\otimes \ell^2.
\end{array}$$
Thus  the kernel of $\psi$ is isomorphic to $ S_{2}H\otimes \ell^2$, and the image to 
$$S_{22}H\ot \ell \oplus  S_{21}H\otimes \ell^2.
$$
This gives a matrix $\psi$ of linear forms in  {$n+1$} variables, of size $a_n\times b_n$ and constant rank $r_n$, where
$$
a_n=\frac{n (n+1)^2(n+2)}{12} ,\quad b_n=\frac{(n+2)(n+1)^2n(n-1)}{24}, 
\quad r_n=\frac{n(n^2-1)(n+4)}{12}.
$$

The first nontrivial case is   $n=3$, where the image of the map $V\rightarrow Hom(S_{21}V,S_{221}V)$ is a four dimensional
space of matrices of size $20\times 20$, of constant rank $14$.   

The kernel bundle is the irreducible  homogeneous bundle $\cK=S^{2}\cQ(-2)$,
so $c_1(\cE)=n-1$.
The image bundle $\cE$ fits into a short exact sequence  
$$0\rightarrow S_{21}\cQ(-1)\rightarrow \cE\rightarrow S_{22}\cQ\rightarrow 0.$$

 In order to study the deformations of $\cE$, we will need the following lemma:

\begin{lemma}\label{lem10}
The dual bundle $\cE^\vee$ is acyclic. 
\end{lemma}

\proof 

 Start with the exact sequence 
\be\label{ex11}
0\rightarrow S^{2}\cQ(-2)\rightarrow  {\ul{S_{22}V}} \rightarrow \cE\rightarrow 0.
\ene

Dualize \eqref{ex11} and apply  Borel-Weil to see $H^0(S^{2}\cQ^\vee(2))=S_{22}V^\vee$. \qedsymbol

\begin{prop}
The vector bundle $\cE$ is   stable. It is not infinitesimally rigid, although it is rigid in the category of homogeneous bundles. Moreover $h^q(End(\cE))=0$ for $q>1$, so its deformations are unobstructed.
\end{prop}

\begin{proof} Twist \eqref{ex11} by $\cE^\vee$ to get 
$$0\rightarrow S^{2}\cQ(-2)\otimes\cE^\vee\rightarrow S_{22}V\otimes\cE^\vee\rightarrow End(\cE)\rightarrow 0.
$$
Since the central term  is acyclic by   Lemma \ref{lem10}, we deduce that 
$$H^q(End(\cE))\simeq H^{q+1}(S^{2}\cQ(-2)\otimes\cE^\vee).
$$
In order to compute this, consider exact sequence 
$$0\rightarrow S^{2}\cQ(-2)\otimes\cE^\vee\rightarrow S^{2}\cQ(-2)\otimes S_{22}V^\vee \rightarrow 
End(S^2\cQ)\rightarrow 0.
$$
By Bott's theorem $S^{2}\cQ(-2)$ is acyclic,  {so}  
$$H^q(End(\cE))\simeq H^q(End(S^2\cQ)).$$
Moreover, $End(S^2\cQ)$ has three irreducible components, namely 
$\cO$, $S_{10\ldots 0-1}\cQ$ and $S_{20\ldots 0-2}\cQ$. By Bott's theorem again, the second component is 
acyclic. The last one has a nontrivial cohomology group in degree one, 
namely the module  $S_{20\ldots 0-1-1}V$, that is 
$$H^1(End(\cE))=Ker(S^2V\otimes \wedge^2V^\vee\rightarrow 
V\otimes V^\vee).$$
In particular $H^1(End(\cE))^{SL(V)}=0$, which means that $\cE$
is infinitesimally rigid in the category of homogeneous bundles.
This concludes the proof. 
\end{proof}

\begin{prop}\label{moduli}
The vector bundle $\cE$ has
an $ {\frac{(n+1)^2(n^2+2n-4)}4+1}$-dimensional  
space of  deformations $\tilde \cE$ which are not homogeneous bundles, but keep the 
property that $\tilde\cE$ and $\tilde \cE^\vee(1)$ are generated by global sections. 
\end{prop}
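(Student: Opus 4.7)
The plan is to use Kuranishi theory. By the previous proposition, $H^q(End(\cE))=0$ for $q\geq 2$, so the deformations of $\cE$ are unobstructed and parametrized by a smooth germ of dimension $h^1(End(\cE))$. First I would verify this dimension. The preceding proof identified $H^1(End(\cE))=\ker\bigl(S^2V\otimes \wedge^2V^\vee \to V\otimes V^\vee\bigr)$. As $GL(V)$-modules, $S^2V\otimes \wedge^2V^\vee \cong S_{2,0^{n-2},-1,-1}V \oplus \fsl(V)$ and $V\otimes V^\vee \cong \fsl(V)\oplus \CC$; by Schur's lemma the unique nonzero equivariant map identifies the $\fsl(V)$-summand of the source with $\fsl(V)\subset V\otimes V^\vee$. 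Hence the kernel is the irreducible $S_{2,0^{n-2},-1,-1}V$, of dimension
$$\frac{n(n+1)^2(n+2)}{4} - \bigl((n+1)^2-1\bigr) = \frac{(n+1)^2(n^2+2n-4)}{4}+1,$$
matching the claim.

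For the non-homogeneity assertion I would combine two facts: first, $H^1(End(\cE))^{SL(V)}=0$ from the previous proposition, so the Zariski tangent vector at $\cE$ to any locus of $SL(V)$-homogeneous deformations must vanish; second, homogeneous bundles on $\PP V$ are discretely classified by representations of the parabolic $P$ stabilizing a line in $V$, so continuous families of such bundles are necessarily isotrivial. Together these imply $\cE$ is an isolated point of the locus of homogeneous bundles inside the Kuranishi space, so a generic $\tilde\cE$ in that space is not homogeneous.

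For the global-generation property I would apply semi-continuity together with openness of surjectivity. Lemma~\ref{lem10} gives that $\cE^\vee$ is acyclic, and an analogous Bott computation on \eqref{ex11} shows $H^q(\cE)=0$ for $q\geq 1$. By upper semi-continuity and constancy of the Euler characteristic in a flat family, both $h^0(\tilde\cE_t)$ and $h^0(\tilde\cE_t^\vee(1))$ remain constant on a neighborhood of $0$; surjectivity of the evaluation map of a coherent sheaf at each fiber is then an open condition on the base. Shrinking the Kuranishi germ accordingly yields the claimed family. The step I expect to require the most care is the non-homogeneity assertion: one must argue rigorously that the set of $GL(V)$-homogeneous bundles in a Kuranishi neighborhood of $\cE$ truly collapses to the single isomorphism class of $\cE$, rather than just having vanishing tangent space there.
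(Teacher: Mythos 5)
Your proof follows the same basic strategy as the paper (vanishing of higher cohomology of $\cE$ and $\cE^\vee(1)$, semi-continuity, openness of global generation), but you supply several details the paper's proof leaves implicit --- in fact, the paper's own proof only addresses the global-generation assertion and silently relies on the preceding proposition for both the dimension and the non-homogeneity claims. Your explicit module decomposition $S^2V\otimes\wedge^2V^\vee\cong S_{(2,0^{n-2},-1,-1)}V\oplus\fsl(V)$ and the resulting dimension count are correct and match the formula in the statement. Your discussion of why the homogeneous-bundle locus is zero-dimensional at $\cE$ (combining $H^1(\tEnd\cE)^{SL(V)}=0$ with the fact that this locus is a locally closed $SL(V)$-invariant subset of the smooth Kuranishi germ) addresses a genuine gap in the paper's exposition, and your concern in the final paragraph is the right one to flag.

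There is, however, one slip in the cohomology step. You invoke Lemma~\ref{lem10}, which asserts that $\cE^\vee$ is acyclic, and pair it with the vanishing $H^{q}(\cE)=0$ for $q\ge1$ to conclude constancy of both $h^0(\tilde\cE_t)$ and $h^0(\tilde\cE_t^\vee(1))$. But acyclicity of $\cE^\vee$ has no bearing on the cohomology of $\cE^\vee(1)$; these are different twists. What you need, and what the paper obtains by applying Bott to the filtration sequence $0\to S_{21}\cQ(-1)\to\cE\to S_{22}\cQ\to 0$ (equivalently to its twisted dual $0\to S_{22}\cQ^\vee(1)\to\cE^\vee(1)\to S_{21}\cQ^\vee(2)\to 0$), is the vanishing $H^{q}(\cE^\vee(1))=0$ for $q\ge 1$. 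With that substitution in place of the Lemma~\ref{lem10} citation, the semi-continuity and openness argument goes through exactly as you describe.
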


\proof Using the exact sequence 
$$0\rightarrow S_{21}\cQ(-1)\rightarrow \cE\rightarrow S_{22}\cQ\rightarrow 0$$
and  {Bott's}  theorem, it is straightforward to check that $\cE$ and $\cE^\vee(1)$
have no higher cohomology. By semi-continuity this must remain true for a small deformation $\tilde \cE$, 
and consequently $h^0(\tilde \cE)$ remains constant under small deformations. In this situation
the condition to be generated by global sections is open, and the claim follows. \qed 

\medskip 
The   unexpected consequence, at least for us, is that in this case it is possible to
deform $\psi$ into a non-equivariant morphism of vector spaces, while keeping the
property that the rank is constant.

\subsection{ Example:  $\mu=(2^a,1^b)$ and $\nu=(2^a,1^{b+1})$}

Here  the kernel $\cK$ of $\phi$, its image $\cE$ and its cokernel $\cC$ fit into
simple exact sequences, namely
$$0\rightarrow S_{2^{a-1},1^{b}}Q(-2)\rightarrow \cK\rightarrow S_{2^{a },1^{b-1}}Q(-1)\rightarrow 0,$$
$$0\rightarrow S_{2^{a-1},1^{b+1}}Q(-1)\rightarrow \cE\rightarrow S_{2^{a },1^{b}}Q\rightarrow 0,$$
$$0\rightarrow S_{2^{a-1},1^{b+2}}Q\rightarrow \cC\rightarrow S_{2^a,1^{b+1}}Q(1)\rightarrow 0.$$
We   deduce  
$$\begin{array}{ll}
H^0(\cE)=S_{2^a,1^b} V, & H^0(\cE^\vee(1))=S_{2^a,1^{b+1}} V^\vee, \\ 
H^0(\cK(1))=S_{[a,b-1]} V, & H^0(\cK^\vee)=S_\mu V^\vee, \\ 
H^0(\cC(-1))=S_\nu V, & H^0(\cC^\vee(2))=S_{[a,b+2]} V^\vee. 
\end{array}$$
In particular all these homogeneous bundles must be indecomposable, and even stable by \cite{MR1104341}.
We expect that,  
  as in the previous example, they can be deformed to non-homogeneous bundles. 

Consider the special case $a=1$, which is closest to the classical case $\La k V\ra \La{k+1}V$.
Here one has rank
$r=\binom{n}{b+2}+\binom nb\frac{(n-b)(n-1)}{b+2}$.
The first case is $n=3$, $b=1$ where one has a four dimensional subspace
of $\BC^{20}\ot \BC^{15}$ of rank $11$. In this case $c_1(\cE)=7$, so $c_1(\cE^\vee(1))=4$. (When $a=b=1$, $c_1(\cE)=(n^2+3n-4)/2$.)

\medskip\noindent 


\subsection{$GL(V)$-equivariant spaces of bounded rank with base not $\BP V$}
When the base space is not given by the natural representation of $GL(V)$, it  {is} more difficult 
to construct equivariant spaces of matrices of constant,  or 
 {even} bounded rank. A well-known example is provided by 
the adjoint action of $\fsl(V)$ on itself; the general element being regular semisimple, its commutator is 
a Cartan subalgebra and the generic corank of the adjoint action is thus  $n$; adjoint actions of simple Lie algebras are discussed in \cite{MR2268360} and proved to be rank critical. 

In this section we give  {additional}
  examples of spaces of bounded rank, and ask if their main common feature may explain their existence. 

\begin{example}\label{wedge}
Let $V=\Lambda^2A$, and let $\aaa=2p$ be even.  Then $S_{2^p}A$ is a submodule of $S^2(\La p A)$ while 
$S_{2^p11}A$  is a submodule of $\La 2 (\La {p+1} A)$, both with multiplicity one.
 {Consider    
\begin{align*}
 \phi: \La 2 A&\hookrightarrow\thom(S_{2^p}A, S_{2^p11}A)\\ 
  v\ww w & \mapsto \Big(
X^2
\mapsto
(X\ww v)\ot  ( X\ww w)- (X\ww w)\ot  ( X\ww v)\Big),
\end{align*}}
  and extending linearly.

If $z=v_1\ww w_{1}+\cdots +v_p\ww w_{p}\in \Lambda^2A$   is a general element, 
 {then} $X^2\in \tker \phi_z$ when $X=v_1\ww v_2\ww\cdots \ww v_p$. Such an $X$ is a Pl\"ucker
representative of a $p$-dimensional subspace $V$ of $A$ which is isotropic with respect to 
the skew-symmetric two-form $\omega$ that is dual to $z$. Conversely, such a $V$ being given, 
one can choose another isotropic subspace $W$ of $A$ which is transverse to $V$, in which case 
$\omega$ restricts to a perfect duality between $V$ and $W$. If $(v_1,\ldots ,v_p)$ and 
$(w_1,\ldots ,w_p)$ are dual basis of $V$ and $W$, then we can write $z=v_1\ww w_{1}+\cdots +v_p\ww w_{p}$, 
and $v_1\ww v_2\ww\cdots \ww v_p$ is  a Pl\"ucker representative for $V$. 

We conclude that the linear span  {$K\isom V_{2\om_p}^{Sp(2p)}$} of the second Veronese image of the Lagrangian Grassmannian $ {\langle v_2(LG_z(p,2p))\rangle}$ is contained in the kernel of $\phi_z$. This is unexpected since 
$$\tdim S_{2^p}A=\frac 1{p+1}\binom \aaa p\binom{\aaa+1}p
<\tdim S_{2^p11}A=\frac 3{\aaa-p+1}\binom \aaa p\binom{\aaa+1}{p+3}$$
for $p\ge 5$. The dimension of $K$ can be computed from the Weyl dimension formula for the symplectic group 
$Sp(2p)$, which gives 
$$\dim K =24\frac{(2p+1)!(2p+3)!}{p!(p+1)!(p+3)!(p+4)!}. $$
For $p=5$ this gives matrix of size $19404\times 20790$ of rank bounded by $14685$.
\end{example}

\begin{example}
Let $V=\Lambda^2A$, and let $\aaa=2p+1$ be odd.  
Consider an equivariant embedding $\La 2 A\subset\thom(S_\lambda A, S_\mu A )$. That such an embedding 
exists is equivalent to the condition that $\mu$ can be obtained by adding two boxes to $\lambda$,
not on the same row. The embedding is  { unique  up to scale}.

A general element $z=v_1\ww w_{1}+\cdots +v_p\ww w_{p}\in \Lambda^2A$ defines a unique hyperplane $H_z$ of $A$ such that $z$ belongs to $\wedge^2H_z$. The equivariance of $\phi$ implies that there is a commutative diagram
$$\xymatrix{ S_\lambda A\ar@{->}[rr]^{\phi_z}  & & S_\mu A \\
 S_\lambda H_z\ar@{->}[rr]^{\phi_z^{H_z}}\ar@{(->}[u] & &  S_\mu H_z\ar@{->}[u]}$$ 
 {Set $s_\lambda(n)=\tdim(S_\lambda\CC^n)$.}  
 {If 
$$s_{\lambda}(2p)-s_{\mu}(2p)> s_{\lambda}(2p+1)-s_{\mu}(2p+1),$$
then $\phi$ has bounded rank because
  the factorization above shows that the kernel of $\phi_z$    contains the kernel 
of  $\phi_z^{H_z}$.}

Take for example $p=2$, $\lambda = (32)$ and $\mu =(3211)$. Then $  s_{\lambda}(5)=s_{\mu}(5)=175$.  {
Here  $s_{\lambda}(4)=60> s_{\mu}(4)=20$,}
so $\phi$ has at least a $40$-dimensional kernel.
\end{example}
  
\begin{example}\label{wedge3C7}
Let $A$ be seven dimensional and let $V=\wedge^3A$. The action of $SL(A)$ on $V$ yields a morphism
$\fsl(A)\otimes V\lra V$ that we can see as a map $V\lra Hom(\fsl(A),V)$. Since $\dim\fsl(A)=48
>\dim V=35$, one could expect the generic point in the image of this  morphism to be surjective 
but we claim this is not the case. 
Indeed, it is enough to check it at a general point $\omega\in V$. As already known to E. Cartan, 
the stabilizer of $\omega$ in $SL(A)$ is then a copy of $G_2$, and the associated morphism $\phi_\omega$ in 
$Hom(\fsl(A),V)$ is $\fg_2$-equivariant.  {Compare the decompositions of $ \fsl(A)$ and $V$
into $\fg_2$-modules:} 
$$ \fsl(A)=\fg_2\oplus V_{2\omega_1}\oplus A, \qquad V= V_{2\omega_1}\oplus A\oplus \CC,$$
where $V_{2\omega_1}$ is a hyperplane in $S^2A$. The important point in these decomposition is that the 
trivial factor $\CC$ appears on the right hand side but not on the left hand side. Therefore by Schur's 
Lemma it cannot be contained in the image of $\phi_\omega$. We thus get a $35$-dimensional space $\phi(V^\vee)$ 
of $48\times 35$-matrices of generic rank $34$.


 {A similar}  argument applies when $A$ is eight dimensional and  $V=\wedge^3A$. Since $\dim\fsl(A)=63
>\dim V=56$, one could again expect the generic point in the image of the  morphism $V\lra Hom(\fsl(A),V)$
to be surjective, and again this is not the case. To see this, it suffices to observe that the stabilizer
in $SL(A)$ of a generic element $\omega$ in $V$ is a copy of $SL(3)$. Then as before $V$ contains a trivial 
$SL(3)$-module (generated by $\omega$) while $\fsl(A)$ does not, and therefore $\phi_\omega$ cannot be surjective. 

Note finally that  {when  $\tdim A>8$}, the dimension of $\fsl(A)$ gets smaller than the dimension
of $V=\wedge^3A$, so the argument  {no longer applies}. 
\end{example}

These are all the examples we are aware of. Hence the following:

\medskip\noindent {\bf Question}. {\it Suppose that  $V=S_\pi A$
admits a $GL(A)$-embedding  as a space of bounded rank. Is the stabilizer in $GL(A)$ of 
a general element of $V$ positive dimensional?}

\section{Big matrices from small ones}

\subsection{A motivating example}
Consider a matrix $M$ of size $a\times (a+b)$ of linear forms, with constant rank $a<a+b$. This will be the
case of a general matrix whose entries are general combinations of $c+1$ linear forms, with $c\le b$. 
In this case, there is an Eagon-Northcott complex
\cite{MR142592} of sheaves on $\PP^c$ which is everywhere exact, and 
can be written as
$$0\ra S^{b}A\otimes\cO(-b)\ra  S^{b-1}A\otimes B\otimes\cO(-b+1)\ra \cdots \hspace*{2cm}$$ 
$$\hspace*{3cm} \cdots 
\ra  A\otimes\wedge^{b-1}B\otimes \cO(-1)\ra  \wedge^{b}B\otimes \cO\ra 0.$$ 
Here $A$ has dimension $a$, $B$ has dimension $a+b$ and $M$ is interpreted as a element in $Hom(A,B)$
whose entries are linear forms. The morphisms in the complex, up to twist, are 
$$ e_k(M): S^{k+1}A\otimes \wedge^{b-k-1}B\otimes\cO(-1)\ra  
S^{k}A\otimes \wedge^{b-k}B\otimes\cO$$ 
and can easily be defined by contraction with $M$. Since the Eagon-Norton complex is in this case an
exact complex of vector bundles, the rank of $e_k(M)$ must be constant. In other words we obtain for each 
$M$ and each $k$, a matrix $e_k(M)$  which is of constant rank. 

This observation can be widely generalized. 

\subsection{How to build big matrices}
Consider partitions $\lambda, \lambda'$ and $\mu, \mu'$ such that $\lambda'$ is obtained by suppressing 
one box of $\lambda$, and $\mu'$ is obtained by adding one box to $\mu$. There is then a  {unique up to scale}
equivariant morphism 
$$\Theta : Hom(A,B)\otimes S_\lambda A\otimes S_\mu B\lra S_{\lambda'} A\otimes S_{\mu'} B. $$
So any morphism $X\in  Hom(A,B)$ induces a morphism $\Theta_X : S_\lambda A\otimes S_\mu B\lra S_{\lambda'} A\otimes S_{\mu'} B. $

\begin{prop}\label{constant-gen}
The rank of $\Theta_X $ only depends on the rank of $X$. As a consequence, if $M\subset Hom(A,B)$ is a space
of morphisms of constant rank, then $\Theta_M$ also is.
\end{prop}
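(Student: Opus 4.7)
The plan is to exploit the $\GL(A)\times\GL(B)$-equivariance of $\Theta$ together with the classical fact that the orbits of $\GL(A)\times\GL(B)$ on $\Hom(A,B)$ are precisely the rank strata.

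First I would observe that $\Theta$ is $\GL(A)\times\GL(B)$-equivariant. This is automatic, since by construction $\Theta$ is (up to scale) the unique equivariant morphism between its source and target: by Pieri's rule, $S_{\lambda'}A$ occurs in $A^\vee\otimes S_\lambda A$ with multiplicity one and $S_{\mu'}B$ occurs in $B\otimes S_\mu B$ with multiplicity one, so the only equivariant recipe is to contract $X\in \Hom(A,B)$ simultaneously against one box of $\lambda$ and one box of $\mu$.

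Next I would unpack this equivariance into a conjugation identity. Letting $(g,h)\in\GL(A)\times\GL(B)$ act on $\Hom(A,B)$ by $(g,h)\cdot X=hXg^{-1}$, the equivariance of $\Theta$ reads
\[
\Theta_{h X g^{-1}} \;=\; (g_{\lambda'}\otimes h_{\mu'})\circ \Theta_X\circ (g_{\lambda}\otimes h_{\mu})^{-1},
\]
where $g_{\lambda}$ (respectively $g_{\lambda'}$, $h_{\mu}$, $h_{\mu'}$) denotes the induced action of $g$ on $S_\lambda A$ (respectively of $g$ on $S_{\lambda'}A$, of $h$ on $S_\mu B$, of $h$ on $S_{\mu'}B$). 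Since the outer factors are invertible, $\Theta_{hXg^{-1}}$ and $\Theta_X$ are conjugate and hence have the same rank.

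Finally I would invoke the standard fact that two elements of $\Hom(A,B)$ of the same rank are $\GL(A)\times\GL(B)$-conjugate; combined with the previous step, this forces $\rank(\Theta_X)$ to depend only on $\rank(X)$, which is the first assertion. The consequence is then immediate: if every nonzero $X\in M$ has the same rank $r$, then every $\Theta_X$ has the common value of the rank attached to $r$, so $\Theta_M\subset \Hom(S_\lambda A\otimes S_\mu B,\,S_{\lambda'}A\otimes S_{\mu'}B)$ is a space of constant rank. The argument is entirely formal, and the only point requiring care is the bookkeeping of which group acts on which tensor factor when writing down the conjugation identity; no serious obstacle arises.
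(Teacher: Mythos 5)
Your argument is correct and takes essentially the same route as the paper: both rest on the $\GL(A)\times\GL(B)$-equivariance of $\Theta$ and the classical fact that the $\GL(A)\times\GL(B)$-orbits on $\Hom(A,B)$ are precisely the rank strata. You simply spell out the conjugation identity that is implicit in the paper's one-line proof.
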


\proof Since the construction is equivariant, the rank of $\Theta_X$  {is constant on the $GL(A)\times GL(B)$-orbits,
which are indexed by the matrix rank.} \qed

\medskip  {The} same argument  {yields} a more general version with $GL(A)\times GL(B)$-modules that
are not  {necessarily} irreducible. This is just an expansion of Proposition \ref{constant}, but the  {greater} 
generality allows  {one} to construct infinitely many spaces of matrices of constant rank just from  one. 

\subsection{A simple example of Eagon-Northcott type}
Consider one of the simplest morphisms appearing in an Eagon-Northcott complex, namely:
$$ \Theta_X : S^2 A\otimes B\lra A\otimes \wedge^2B $$
defined by $X\in Hom(A,B)$. 

\begin{prop}
Suppose that $X$ has rank $r$, then $$ \rank \Theta_X =
abr-a\binom{r+1}{2}-b\binom{r}{2}+2\binom{r+1}{3}.
$$

\end{prop}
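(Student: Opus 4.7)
The plan is to exploit Proposition~\ref{constant-gen}, which allows one to compute $\rank\Theta_X$ on any convenient representative of rank $r$. I would fix bases so that $X(a_i)=b_i$ for $1\le i\le r$ and $X(a_i)=0$ for $i>r$, and set $A_1=\sspan(a_1,\ldots,a_r)$, $A_0=\tker X$, $B_1=\tim X=\sspan(b_1,\ldots,b_r)$, and $B_0$ any complement to $B_1$ in $B$. These splittings induce decompositions of $S^2A\ot B$ and $A\ot\wedge^2 B$ into summands indexed by how many $A$-factors come from $A_1$ versus $A_0$, and similarly for $B$. The defining formula
$$\Theta_X(a_ja_k\ot b)=a_k\ot X(a_j)\ww b+a_j\ot X(a_k)\ww b$$
shows at once that $\Theta_X$ respects this bi-grading, that the summand $S^2A_0\ot B$ lies entirely in the kernel, and that the four remaining blocks land in four pairwise linearly independent target summands, so $\rank\Theta_X$ is the sum of the four block ranks.

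The computation of each block is then essentially mechanical. For $S^2A_1\ot B_1\ra A_1\ot\wedge^2 B_1$, after identifying $A_1\cong B_1$ via $X|_{A_1}$, one obtains the polarized Koszul map; since $S^2B_1\ot B_1=S^3B_1\op S_{21}B_1$ and $B_1\ot\wedge^2B_1=\wedge^3B_1\op S_{21}B_1$, Schur's lemma forces this map to vanish on $S^3B_1$ and to restrict to an isomorphism on $S_{21}B_1$, so its rank equals $\tdim S_{21}B_1=2\binom{r+1}{3}$. The block $S^2A_1\ot B_0\ra A_1\ot B_1\ww B_0$ is the tensor product of the injective polarization $S^2A_1\hookrightarrow A_1\ot B_1$ with the identity on $B_0$, composed with the canonical isomorphism $B_1\ot B_0\cong B_1\ww B_0$ coming from $B_1\cap B_0=0$; its rank is $\binom{r+1}{2}(b-r)$. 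The block $A_1\cdot A_0\ot B_1\ra A_0\ot\wedge^2 B_1$ reduces to $(a-r)$ copies of the antisymmetrization $B_1\ot B_1\ra\wedge^2 B_1$, each of rank $\binom{r}{2}$, contributing $(a-r)\binom{r}{2}$; and $A_1\cdot A_0\ot B_0\ra A_0\ot B_1\ww B_0$ is an isomorphism after these identifications, contributing $r(a-r)(b-r)$.

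Adding the four contributions and collecting terms yields
$$\frac{r^3-r}{3}+\binom{r+1}{2}(b-r)+(a-r)\binom{r}{2}+r(a-r)(b-r)=abr-a\binom{r+1}{2}-b\binom{r}{2}+2\binom{r+1}{3},$$
which is the claimed formula. The only conceptually delicate step is the first block, where one has to identify $S^3B_1$ as the kernel of the Koszul-style polarization; the remaining three blocks are tautological once the decomposition is fixed, and the final algebraic simplification is routine. As a sanity check, the full-rank case $a=b=r$ yields corank $\binom{r+2}{3}=\tdim S^3 A$, consistent with $S^3 A$ sitting in the kernel, as predicted by the Eagon--Northcott complex.
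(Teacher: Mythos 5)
Your proof is correct and follows essentially the same route as the paper's: the same splitting of $A$ and $B$, the same five-block decomposition of $\Theta_X$ (with $S^2(\tker X)\otimes B$ in the kernel), the same identification of the ranks of the three "mixed" blocks as surjective/isomorphic/injective, and the same reduction of the final block $S^2A'\otimes B'\to A'\otimes\La 2 B'$ to the Koszul-type polarization with image $S_{21}$. Your brief Schur's-lemma justification for that last block and the concluding sanity check for $a=b=r$ are small useful additions but not a different argument.
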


\begin{proof} Choose splittings $A=\tker(X)\op A'$
and $B=B'\op B''$, where $B'=\tim(X)\isom A'$. Then $\Theta_X$ decomposes as a sum
of the following maps, whose various images we examine separately:
\begin{align*}
S^2\tker X\ot B&\ra 0\\
(\tker X)\cdot A'\ot B'&\ra \tker X\ot \La 2 B'\\
(\tker X)\cdot A'\ot B''&\ra \tker X\ot   B'\ww B'' \\
S^2A'\ot B''&\ra A'\ot B'\ww B''\\
S^2A'\ot B'&\ra A'\ot \La 2 B'
\end{align*}
This yields a block decomposition of $\Theta_X$, whose rank is therefore the sum
of the ranks of the maps above. Those ranks are easy to compute: the first map is zero,
the second one is surjective, the third one is an isomorphism, the fourth one is injective; 
the last one can be identified to the morphism $S^2A'\ot A'\ra A'\ot \La 2 A'$,
whose image is $S_{21}A'$.
Thus the rank of $\Theta_X$ is
$$
(a-r)\binom r2+(a-r)r(b-r)+\binom{r+1}{2} (b-r)+\frac{r^3-r}3,
$$
which after a slight rewriting yields our claim.\end{proof}

\section{Symplectic group}\label{spsect}
\subsection{General set-up}
Branching rules for restrictions of irreducible representations from $Sp(2n)$ to $Sp(2p)\times Sp(2q)$,
where $n=p+q$, are given in \cite{MR411400}. Irreducible representations of $Sp(2n)$
are indexed by partitions $\lambda$ of length at most $n$; we denote them by $S_{\langle \lambda\rangle}W$
where $W$ is the natural representation of dimension $2n$. The representation $S_{\langle \lambda\rangle}W$
is the submodule of  $S_\lambda W$ consisting of  the common kernel of all the possible contractions by the symplectic form. 
Specializing formula \cite[(4.15)]{MR411400} to $p=n-1$ and $q=1$ we obtain:
\be\label{sympdecomp}
S_{\langle \lambda\rangle}(\BC^{2p}\oplus\CC^2)=\bigoplus_{\ell(\zeta)\le 2} 
S_{\langle \lambda/\zeta\rangle}\BC^{2p}\otimes S_\zeta\CC^2,
\ene
where $S_{\langle \lambda/\zeta\rangle}U=\bigoplus_{\eta}c_{\zeta\eta}^\lambda S_{\langle \eta\rangle}U$
and the $c_{\zeta\eta}^\lambda$ are the Littlewood-Richardson coefficients. 

The stabilizer $P$ in $Sp(W)$ of a line $\ell\in\PP(W)$ is a parabolic subgroup that  also preserves the 
hyperplane $\ell^\perp$. Its unipotent radical $P_u$ is the subgroup that acts trivially on the three factors 
$\ell, \ell^\perp/\ell, W/\ell^\perp$. A Levi factor $L$ is obtained by  {choosing} a decomposition $W=\ell\oplus\ell'\oplus H$,
where $H$ is the orthogonal complement (with respect to the symplectic form)  to $\ell\oplus\ell'$, so that $ \ell^\perp=\ell\oplus H$. In particular \eqref{sympdecomp}
yields the decomposition of $S_\lambda W$ as an $L$-module. 

It is then straightforward to find a criterion ensuring that the natural morphism 
$\phi: W\lra Hom(S_{\langle \mu\rangle}W,S_{\langle \nu\rangle}W)$ has bounded rank,  for two 
partitions $\mu, \nu$ such that $\nu/\mu$ is one box $b$. 
 \[   \young(xxxxxxx,xxxxxx,xxc~,xxb,xx,x)\]

\begin{prop}
The morphism $\phi_v$ is never surjective. It is not injective as long as the box $b$ does not belong to 
the first  row. 
\end{prop}

\proof Let $\ell=\CC v$, and fix an orthogonal decomposition  $W=H\oplus (\ell\oplus \ell')$ defining   
  a Levi factor $L$ of the parabolic subgroup $P\subset Sp(W)$ that fixes $\ell$. Let $K=\ell\oplus \ell'$. 
As  $L$-modules,   
$$
S_{\langle \mu\rangle}(H\oplus K)=\bigoplus_{\ell(\zeta)\le 2} S_{\langle \mu/\zeta\rangle}H\otimes S_\zeta K, 
$$
$$
S_{\langle \nu\rangle}(H\oplus K)=\bigoplus_{\ell(\delta)\le 2} S_{\langle \nu/\delta\rangle}H\otimes S_\delta K.
$$
The multiplication by $v$ acts on the $K$-factors and leaves the $H$-factors untouched. All the partitions 
appearing in $\mu/\zeta$ are contained in $\mu$, in particular $\nu$ is not one of them, while it appears in 
$ \nu/\delta$ for the empty partition  $\delta$. This proves that $\phi_v$ cannot be surjective. 

Conversely, to prove that $\phi_v$ is not injective, we make the following observation. 
Let $m>0$ be the number of boxes of $\mu$ to  the east of $c$, including it. Let $\theta$ be the 
partition obtained by suppressing these boxes from the diagram of $\mu$. Since these boxes all 
belong to the same row, $\theta$ appears in some $\mu/\zeta$ if and only if $\zeta=(m)$. 
Similarly, the Littlewood-Richardson rule implies that $\theta$ appears in some $\nu/\delta$
if and only if $\delta=(m,1)$. So $\phi_v$ has to send $S_{\langle \theta\rangle}H\otimes S^mK$
to $S_{\langle \theta\rangle}H\otimes S_{m,1}K$. But $S_{\langle \theta\rangle}H\otimes v^m $
is in the kernel of this map. \qed



\subsection{Example: $\BC^6\subset Hom(\wedge^{\langle 2\rangle}\BC^6,\wedge^{\langle 3\rangle}\BC^6)$}
 Consider
$G=Sp_6$ and let $W$ denote the natural six-dimensional representation. Then the image of the 
map $W\rightarrow Hom(\wedge^{\langle 2\rangle}W,\wedge^{\langle 3\rangle}W)$ is a six dimensional
space of matrices of size $14\times 14$, of constant rank $9$. 

To be more explicit, a line $\ell$ in $W$ determines a hyperplane $H=\ell^\perp$,  
 and   in $\wedge^2W$ one obtains two flags as follows  
$$\xymatrix{
 & & \wedge^2H\ar@{->}[rd] & & \\
 0  \ar@{->}[r] & \ell \wedge H \ar@{->}[rd]\ar@{->}[ru] & & \ell \wedge W+\wedge^2H \ar@{->}[r] & \wedge^2W\\ 
 & & \ell \wedge W\ar@{->}[ur] & &
}$$
There are two different ways to get a subspace in $\wedge^{\langle 2\rangle}W$ from a subspace $U$ of  $\wedge^2W$, 
either by taking the intersection with $\wedge^{\langle 2\rangle}W$, or by considering the projection $\overline{U}$ 
according to the decomposition $\wedge^2W=\wedge^{\langle 2\rangle}W \oplus\CC\omega$, where $\omega$ denotes the (dual) symplectic form. We  {obtain} a diagram of the same shape:
$$\xymatrix{
 & & \wedge^{\langle 2\rangle}H\ar@{->}[rd] & & \\
 0  \ar@{->}[r] & \overline{\ell \wedge H} \ar@{->}[rd]\ar@{->}[ru] & & \overline{\wedge^2H} \ar@{->}[r] & 
 \wedge^{\langle 2\rangle}W\\ 
 & & \overline{\ell \wedge W}\ar@{->}[ur] & &
}$$
 {It is straightforward to compute}  the kernel and image of $\phi$ in this case,  {which} are 
respectively the five-dimensional space $\overline{\ell \wedge W}$ and the nine-dimensional space
$\ell \wedge  \wedge^{\langle 2\rangle}W$. 

In terms of vector bundles, we deduce the following result. 
The quotient $\cN=\cL^\perp/\cL\simeq\cN^\vee$ is the {\it{null-correlation bundle}} (see, e.g., \cite[I.4.2]{oss}). 
This is a rank four bundle with an invariant symplectic form, induced by the one on $W$.
It satisfies $c(\cN)=1+h^2+h^4+\cdots + h^{n-1}$.

\begin{lemma} 
The  image $\cE$ of $\phi$ is a homogeneous bundle of rank nine,
fitting into an extension 
$$ 0\rightarrow\wedge^{\langle 2\rangle}\cN\rightarrow\cE\rightarrow
\cN(1) \rightarrow 0.$$ 
\end{lemma} 

By the computations in the proof that follows, $Ext^1(\cN(1), \wedge^{\langle 2\rangle}\cN)=\CC$,
so this extension is unique. In fact there is a commutative diagram 

$$\begin{CD}
  @. 0   @. 0   @.  @. \\
    @. @VVV        @VVV   @. \\
  @. \cO   @= \cO   @.  @. \\
    @. @VVV        @VVV   @. \\
  0  @>>> \wedge^2\cN   @>>> \wedge^2\cQ   @>>> \cN(1)   @>>> 0 \\
    @. @VVV        @VVV   @| \\
  0  @>>> \wedge^{\langle 2\rangle}\cN   @>>> \cE  @>>> \cN(1)   @>>> 0 \\
     @. @VVV        @VVV   @. \\
      @. 0   @. 0   @.  @. 
\end{CD}$$

\medskip\noindent
and the middle vertical exact sequence   defines $\cE$ as the quotient of $\wedge^2\cQ$ by its
global section defined by the (dual) symplectic form $\omega\in\wedge^2W$. 

In contrast to the previous examples, we have:

\begin{prop}
$\cE$ is   stable and not rigid. 
\end{prop}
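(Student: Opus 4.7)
The plan is to adapt the strategy used in the preceding stability/non-rigidity argument (for $\mu=(2,2)$, $\nu=(2,2,1)$), with all $GL_n$-objects replaced by their symplectic analogues.

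For stability, I would invoke the criterion of \cite{MR1104341}, which reduces the problem to checking $\mu(\cF) < \mu(\cE)$ for every proper homogeneous subbundle $\cF$. One has $\mu(\cE) = 4/9$, since $\rank(\cE) = 9$ and $c_1(\cE) = 4h$ (from $c_1(\cN) = 0$ and $c_1(\cN(1)) = 4h$, or equivalently from $c_1(\wedge^2\cQ) = 4h$). Because both $\wedge^{\langle 2\rangle}\cN$ and $\cN(1)$ are irreducible as homogeneous bundles -- their fibers are irreducible $L$-modules for the Levi $L = \CC^\times \times Sp_4$ -- any homogeneous subbundle $\cF$ must either be contained in $\wedge^{\langle 2\rangle}\cN$, and hence equal it (with $\mu(\cF) = 0 < 4/9$), or else project isomorphically onto $\cN(1)$, in which case the defining extension splits. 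The second case has $\mu = 1 > 4/9$ and would be destabilizing, so the essential content is to rule out splitting.

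Non-splitting can be verified at the level of the $P$-module structure on the fiber $\cE_{[\ell_0]}$. In a standard symplectic basis $v_1, v_2, v_3, v_3^*, v_2^*, v_1^*$ of $W$ with $\ell_0 = \CC v_1$, the element $\xi \in \fp_u$ defined by $\xi(v_2) = v_1$ and $\xi(v_1^*) = -v_2^*$ (other basis vectors to zero) acts on $\cE_{[\ell_0]} = \wedge^{\langle 2\rangle}(\ell_0^\perp/\ell_0) \oplus \ell_0^* \otimes (\ell_0^\perp/\ell_0)$, sending the class of $[v_1^*]\wedge[v_2]$ in the quotient to $-[v_2^*]\wedge[v_2]$ in the sub. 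Since $[v_2^*]\wedge[v_2]$ is not proportional to the restricted symplectic form, it represents a nonzero class in $\wedge^{\langle 2\rangle}(\ell_0^\perp/\ell_0)$. Thus $\fp_u$ acts non-trivially between the two graded pieces, precluding a $P$-invariant splitting; combined with $Ext^1(\cN(1),\wedge^{\langle 2\rangle}\cN) = \CC$, this pins down $\cE$ as the unique nontrivial extension.

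For non-rigidity I would mirror the cohomological template of Proposition \ref{moduli}: twist $0\to\cO\to\wedge^2\cQ\to\cE\to 0$ by $\cE^\vee$ to obtain $0\to\cE^\vee\to\wedge^2\cQ\otimes\cE^\vee\to End(\cE)\to 0$, then compute $H^1(End(\cE))$ using Bott's theorem on $Sp_6/P$, feeding in the filtration of $\wedge^2\cQ\otimes\cE^\vee$ arising from the defining extensions of both $\cE$ and $\wedge^2\cQ$. Non-vanishing of $H^1(End(\cE))$ produces a first-order deformation, hence non-rigidity. The main obstacle I anticipate is the Bott-theorem bookkeeping: several of the intermediate bundles ($\wedge^2\cQ$, $\cE^\vee$, $End(\wedge^{\langle 2\rangle}\cN)$) are reducible as homogeneous bundles on $Sp_6/P$, so one has to track a number of irreducible summands through the long exact sequences and carefully identify the summand contributing the nonzero class, exactly as in the analogous $\mu=(2,2)$ computation. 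The stability part, by contrast, reduces essentially to the $P$-module observation above.
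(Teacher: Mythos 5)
Your proposal is correct but differs from the paper in both parts. For stability, the paper's route is to first establish simplicity cohomologically (computing $H^0(End(\cE))=\CC$) and then invoke \cite{MR1104341}; you instead verify directly that the extension $0\to\wedge^{\langle 2\rangle}\cN\to\cE\to\cN(1)\to 0$ is non-split by an explicit $\fp_u$-action on the fiber, and combine this with the slope count $\mu(\wedge^{\langle 2\rangle}\cN)=0<4/9=\mu(\cE)$ and irreducibility of the two graded pieces as $L$-modules. Your computation (that $\xi\cdot(\bar v_1^*\wedge\bar v_2)=-\bar v_2^*\wedge\bar v_2\in\wedge^2 H$, not proportional to $\omega_H$) is correct, and this is a cleaner, more self-contained proof of non-splitting than what the paper supplies, at the cost of not yet knowing $H^0(End(\cE))$ (which you would still have to produce for simplicity, though stability does not require it). For non-rigidity, you correctly identify the exact sequence to twist, namely $0\to\cO\to\wedge^2\cQ\to\cE\to 0$, but your stated plan is to run Bott on $Sp_6/P$ and track the many $Sp_6$-irreducible composition factors of $\wedge^2\cQ$, $\cE^\vee$, etc. The paper avoids exactly that bookkeeping: after obtaining $0\to\cE^\vee\to\cE^\vee\otimes\wedge^2\cQ\to End(\cE)\to 0$, it tensors the \emph{dual} of the defining sequence with $\wedge^2\cQ$ to get
$$0\to\cE^\vee\otimes\wedge^2\cQ\to\wedge^2\cQ^\vee\otimes\wedge^2\cQ\to\wedge^2\cQ\to 0,$$
whose middle and right terms are $SL_6$-homogeneous irreducible bundles (the middle has only three $SL_6$-irreducible summands, one trivial and two acyclic by $SL_6$-Bott), so the cohomology of $\cE^\vee\otimes\wedge^2\cQ$ drops out immediately and one reads off $H^1(End(\cE))\cong\wedge^{\langle 2\rangle}W$. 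This is the single piece of insight your sketch misses; without it your approach would work but would require substantially more bookkeeping than you may be anticipating, since over $Sp_6/P$ none of these bundles stays irreducible.
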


\proof Tensoring the middle vertical sequence  with $\cE^\vee$ gives
$$ 
0\rightarrow\cE^\vee\rightarrow \cE^\vee\otimes\wedge^2\cQ\rightarrow End(\cE)\rightarrow 0.
$$
 {By} Bott's theorem $\wedge^2\cQ^\vee$ is acyclic, the only nonzero cohomology group of $\cE^\vee$ 
is $H^1(\cE^\vee)=\CC$. To compute the cohomology of the second term we use the dual of the middle
vertical sequence tensored with $\wedge^2Q$ to get
$$ 
0\rightarrow \cE^\vee\otimes\wedge^2\cQ\rightarrow \wedge^2\cQ^\vee\otimes\wedge^2\cQ\rightarrow  \wedge^2\cQ
 \rightarrow 0.
 $$ 
 The tensor product $\wedge^2\cQ\ot\wedge^2\cQ^\vee$ has three components, one of which is trivial and the 
 other two are acyclic. We  {conclude }  $\cE^\vee\otimes\wedge^2\cQ$ has only one nonzero cohomology group,
 namely $H^1(\cE^\vee\otimes\wedge^2\cQ)=\wedge^{\langle 2\rangle}W$. We  {deduce}
$$
H^0(End(\cE))=\CC, \quad H^1(End(\cE))=\wedge^{\langle 2\rangle}W, 
\quad H^q(End(\cE))=0 {\rm \ for \ } q>1.
$$
 So $\cE$ is simple but not infinitesimally rigid. The stability follows from \cite{MR1104341}.
\qed

\medskip
In this case the non-rigidity  {is} explained by the action of $SL(W)$, since our bundle $\cE$ is only
$Sp(W)$-homogeneous, but not $SL(W)$-homogeneous.  {By} varying the symplectic form  {one obtains} a family of bundles parametrized 
by $SL(W)/Sp(W)$, whose tangent space at the identity is precisely $\wedge^{\langle 2\rangle}W$.

\medskip
We   exhibit our $14\times 14$ matrix by chosing an adapted basis $e_1,\ldots ,e_6$ of $W$, 
in which the (dual) symplectic form is $\omega=e_1\wedge e_2+e_3\wedge e_4+e_5\wedge e_6$. We get
the following constant rank matrix $\psi_v$, depending on $v=(x_1,\ldots ,x_6)\in W$:
\small
$$\left(\begin{array}{cccccccccccccc}
 x_3 &  0 &  -x_2 &  0 &  0 &  x_1 &  0 &  0 &  0 &  -x_6 &  x_5 &  0 &  0 & 0  \\
 x_4 &  0 &  0 &  -x_2 &  0 &  0 &  0 & x_1 &  0 &  0 &  0 &  0 &  -x_6 & -x_5  \\
 x_5 &  -x_5 &  0 &  0 &  -x_2 &  0 &  0 &  0 &  x_1 &  0 &  x_4 &  0 &  -x_3 & 0  \\
 x_6 &  -x_6 &  0 &  0 &  0 &  -x_2 &  0 &  0 &  0 &  x_1 &  0 &  x_4 &  0 & -x_3  \\
 0 &  x_1 &  x_4 &  -x_3 &  -x_6 &  x_5 &  0 &  0 &  0 &  0 &  0 &  0 &  0 & 0  \\
 0 &  x_2 &  0 &  0 &  0 &  0 &  x_4 &  -x_3 &  -x_6 &  x_5 &  0 &  0 &  0 & 0  \\
 0 &  0 &  x_5 &  0 &  -x_3 &  0 &  0 &  0 &  0 &  0 &  x_1 &  0 &  0 & 0  \\
 0 &  0 &  x_6 &  0 &  0 &  -x_3 &  0 &  0 &  0 &  0 &  0 &  x_1 &  0 & 0  \\
 0 &  0 &  0 &  x_5 &  -x_4 &  0 &  0 &  0 &  0 &  0 &  0 &  0 &  x_1 & 0  \\
 0 &  0 &  0 &  x_6 &  0 &  -x_4 &  0 &  0 &  0 &  0 &  0 &  0 &  0 & x_1  \\
 0 &  0 &  0 &  0 &  0 &  0 &  x_5 &  0 &  -x_3 &  0 &  x_2 &  0 &  0 & 0  \\
 0 &  0 &  0 &  0 &  0 &  0 &  x_6 &  0 &  0 &  -x_3 &  0 &  x_2 &  0 & 0  \\
 0 &  0 &  0 &  0 &  0 &  0 &  0 &  x_5 &  -x_4 &  0 &  0 &  0 &  x_2 & 0  \\
 0 &  0 &  0 &  0 &  0 &  0 &  0 &  x_6 &  0 &  -x_4 &  0 &  0 &  0 & x_2  
 \end{array}\right)$$
\normalsize

\bigskip
This has a   curious consequence for  the Koszul map $$ \bar{\psi}: W\lra Hom(\wedge^2W, \wedge^3W),$$
which is of constant rank $10$. Once we have chosen a symplectic form on $W$, we get direct sum decompositions
$\wedge^2W=\wedge^{\langle 2\rangle}W\oplus\CC$, $\wedge^3W=\wedge^{\langle 3\rangle}W\oplus W$ which
induces a decomposition of
  $\bar{\psi}$ into blocks  as follows:
\small
$$\bar{\psi}_v=\begin{pmatrix}
\begin{matrix} & & & & & & & &  \\
 & & & & & &  & &  \\
 & & & & & & & &  \\
& & & & \psi_v& & & &   \\
 & & & &  && & & \\
 & & & & & & && \\
 & & & & & &  & &
 \end{matrix} & \vline & \begin{matrix} 0\\ 0\\0\\ 0\\ 0\\ 0\\ 0\\ 0 \end{matrix}
 \\
 \hline 
 \begin{matrix} & &&& & & & &  \\
 & & & & & &  && \\
 & & & & & & &&  \\
&& & &\theta_v & & &&  \\
 & & & & & &&& \\
 & & & & & & &&
 \end{matrix} & \vline 
 & \begin{matrix} x_1\\ x_2\\ x_3\\ x_4\\ x_5\\ x_6 \end{matrix}
  \end{pmatrix}$$
\normalsize
where $\theta^t$ is the matrix of the morphism $W\lra Hom(W,\wedge^{\langle 2\rangle}W)$. 
The string of zeroes is explained by the fact that there is no nonzero equivariant map 
$W\lra Hom(\CC,\wedge^{\langle 3\rangle}W)$. The rank
of $\bar{\psi}_v$ must be at least equal to the rank of $\psi_v$ plus one, and  {in fact} 
  there is always equality. 
In particular the space
$$\begin{pmatrix} \psi \\ \theta \end{pmatrix}: W\ra  \thom(  \wedge^{\langle 2\rangle}W, \wedge^{  3 }W)
$$
is of constant rank $9$ so $\psi$ is expandable. 

 {
\begin{question} Are all $SP(W)$ spaces with base $W$   expandable to $SL(W)$-spaces? If not, how to distinguish which are?
\end{question}
}

\section{Orthogonal groups: tensorial representations}\label{sosect}
In the case where $G=SO(W)$ is a special  orthogonal group   
Proposition \ref{constant} will in general
fail to hold, as one   expects the morphism $\phi$ to degenerate along the invariant 
quadric $Q\subset\PP(W)$. This is not always the case, obvious counter-examples arise from
  $SO(W)$-modules that  are restrictions of $SL(W)$-modules. We   discuss
two  non-obvious
  examples  where the base space is $\PP(W)$.

Irreducible representations of $SO(m)$ with 
support on the first $\lfloor m/2\rfloor -2$ fundamental weights
are indexed by partitions $\lambda$ of length at most 
$m/2-2$ when $m$ is even
and $(m-1)/2 -1$ when $m$ is odd; we denote them by $S_{[ \lambda]}W$
where $W$ is the natural representation of dimension $m$. As in the symplectic case, the representation 
$S_{[ \lambda]}W$ can be defined in $S_\lambda W$ as the common kernel of all the possible contractions
by the invariant quadratic form. Similarly, if the partition $\nu$ is obtained by adding a box $b$ to a partition 
$\mu$, there is a unique (up to scale) equivariant morphism 
$\phi: W\lra Hom(S_{[\mu]}W,S_{[\nu ]}W)$.

Once we fix a non-isotropic vector $v\in W$, we get an orthogonal decomposition $W=\ell\oplus\ell^\perp$, 
where $\ell=\CC v$. Moreover $\ell^\perp$ inherits an invariant quadratic form  {giving rise to}  a copy of 
$SO(m-1)$ inside $SO(m)$, that acts trivially on $\ell$. In particular the morphism $\phi_v$ is $SO(m-1)$-invariant. 
Formula \cite[(4.12)]{MR411400} gives the following decomposition:
\be\label{sobranch}
S_{[\mu]}(\ell\oplus\ell^\perp)=\bigoplus_{k\ge 0} S_{[\mu/k]}(\ell^\perp)\otimes \ell^k.
\ene
Formally this is the same decomposition as \eqref{mudecomp} that we  used for $SL(W)$, and we can just mimic  the proof of 
Proposition \ref{constant} to  {obtain}:

\begin{prop}
The morphism $\phi_v$ is never surjective. It is not injective as long as the box $b$ does not belong to 
the first  row. 
\end{prop}

\subsection{Case   $\mu=(2)$ and $\nu=(2,1)$} 
Here $S_{[2]}W$ is the hyperplane of $S_2W$ generated by the squares of the isotropic vectors; its invariant complement 
is generated by the dual $\hat{q}$ of the quadratic form. The natural composition $W\otimes \hat{q}\ra W\otimes 
S_2W \ra S_{2,1}W$ is an embedding, and the cokernel is a copy of $S_{[2,1]}W$. We  {may} therefore describe the map 
$\phi : W\ra Hom(S_{[2]}W, S_{[2,1]}W)$ in terms of $\phi : W\ra Hom(S_2W, S_{2,1}W)$ by sending $v\in W$ to the  
composition
$$\phi_v : S_{[2]}W\hookrightarrow S_2W\stackrel{\psi_v}{\lra}S_{2,1}W\lra S_{[2,1]}W.$$
In order to compute the kernel of $\phi_v$, 
we note that any $\kappa\in S_2W$ can be written as $\kappa=\sum_i \kappa_i e_i^2$ for some $q$-orthonormal basis $e_1,\ldots , e_m$ of $W$; it belongs to $S_{[2]}W$ when 
$\sum_i \kappa_i=0$. Since $\hat{q}=\sum_i e_i^2$, the kernel of the projection $S_{2,1}W\lra S_{[2,1]}W$
is the space of tensors of the form $\sum_i w\wedge e_i\otimes e_i$, for $w\in W$. So $\phi_v(\kappa)=0$
if and only if there exists $w\in W$ such that 
$$\sum_i \kappa_i v\wedge e_i\otimes e_i=\sum_i w\wedge e_i\otimes e_i,$$
which means that for each $i$, we have $w=\kappa_i v+\mu_i e_i$ for some scalar $\mu_i$.
If there exists two indices $i\ne j$ such that $\kappa_i\ne \kappa_j$, we deduce that 
$v$ and $w$ belong to $\langle e_i,e_j\rangle$. 
Then for $k\ne i,j$ we must have $\mu_k=0$ and $w=\kappa_k v$, hence $(\kappa_k-\kappa_i)v=\mu_i e_i$
and $(\kappa_k-\kappa_j)v=\mu_j e_j$. So necessarily, up to changing $i$ and $j$, $\mu_j=0$. 
Then we conclude 
that $v$ and $e_i$ must be colinear and  that $\kappa_k$ is independent of $k\ne i$, which implies that 
$\kappa$ must be a linear combination of $\hat{q}$ and $v^2$. We conclude:


\begin{prop}
The kernel of $\phi_v$ is the line generated by $v^2-q(v)\hat{q}$, which is nonzero for all $v\in W$.
In particular $\phi: W\ra Hom(S_{[2]}W,S_{[2,1]} W)$ yields a $\frac{m^2+m-2}2\times \frac{m^3-4m}3$ matrix of linear forms of constant rank
$(m^2+m-4)/2$. 
\end{prop}

When $m=3$ we get a $5\times 5$ space of constant rank $4$.  Since  $\fso_3=\fsl_2$ we  {may} see 
the space  in terms of $SL_2$, as $S^2\BC^2\subset \thom(S^4\BC^2,S^4\BC^2)$. The inclusion
on decomposable elements  is  $\ell^2\mapsto (m^4\mapsto (\ell\ww m)\ell m^3)$.
On a rank one element $\ell^2$ the kernel is spanned by $\ell^4$. Let $x,y$ be a
unimodular basis of $\BC^2$,
then at $x^2+y^2$, the kernel is $x^4+y^4-2x^2y^2=(x^2-y^2)^2$. I.e., over all points $v\in S^2\BC^2$, the kernel
is $(v^\perp)^2$. The associated kernel bundle is thus $\cO_{\pp 2}(-2)$, hence $c_1(\cE)=2$.

This space is  a specialization of $\La 2 \BC^5\subset \BC^5\ot \BC^5$ because it corresponds to 
  the representation $\rho: \fso_3\ra \tend(\BC^5)$ with
image in $\fso_5\isom \wedge^2\BC^5$. However, unlike $\La 2 \BC^5$, it is of constant rank.
In fact the $m=3$ case  generalizes to all odd dimensional representations of $\fso_3$, they all map to spaces of
corank one, and are just specializations of the skew-symmetric matrices in odd dimensions.
The interesting point here is that one obtains constant rank matrices.

When $m=4$ we get a $9\times 16$ space of constant rank $8$.  Since  $\fso_4=\fsl_2\times\sl_2$ we  {may} see 
the space  in terms of two spaces $A, B$ of dimension two, with $W=A\otimes B$. Then 
$$S_{[2]}W=S_2A\otimes S_2B, \quad S_{[31]}W=S_3A\otimes S_{21}B\oplus S_{21}A\otimes S_3B,$$
and the resulting morphisms are of the type discussed in Proposition \ref{constant-gen}.

\begin{remark} A related example was studied in 
\cite{MR2348285}, where it was observed that  the unique (up to scale) 
equivariant morphism 
$$\psi : S^3\BC^2\lra Hom(S^{3d}\BC^2,S^{3d+1}\BC^2) {,}
$$
which on powers of linear forms is $m^3\mapsto (\ell^{3d}\mapsto (m\ww \ell)m^2\ell^{3d-1})$, 
has constant  {corank one}  (and this is no longer true
if one replaces $3d$ by some integer not divisible by three). This leads to an interesting 
$SL_2$-equivariant instanton on $\PP^3$. 
\end{remark}

\subsection{Case  $\mu= (3,1,1)$ and $\nu=(3,2,1)$} 
For $v$ non-isotropic, \eqref{kerphiv} gives
$$Ker(\phi_v)  =  \wedge^3\ell^\perp\otimes \ell^2\oplus \wedge^2\ell^\perp\otimes \ell^3.$$
On the other hand, when  $v$ is isotropic, a calculation similar to the previous case gives
$$Ker(\phi_v)  \simeq  \wedge^3H\oplus  {(\wedge^2H)^{\op 2}}\oplus H \simeq  \wedge^3 \ell^\perp\oplus 
\wedge^2\ell^\perp.$$
 {Thus} the two kernels have the same dimension, and  {we}   conclude:

\begin{prop}
The map $\phi: W\ra Hom(S_{[3,1,1]}W,S_{[3,2,1]} W)$ yields a matrix of linear forms of constant corank $\binom{m-1}3+
\binom{m-1}2$. 
\end{prop}
 
 \subsection{Problem: determine which $SO(W)$-inclusions
of the standard representation have constant rank}
  To solve this problem, it is enough to compare the two possible values 
of the rank of $\phi_v$, obtained for $v$ isotropic, or non-isotropic. In the latter case, the analysis 
above allows one  to extend Proposition \ref{kerimcoker} to the orthogonal case, and yields the analogue 
formula for the kernel of $\phi_v$:
\be\label{kerphiv}Ker(\phi_v)  =  \bigoplus_{k\ge 0}\bigoplus_{\mu\stackrel{k}{\ra}\alpha, c\notin\alpha} 
S_{[\alpha]}\ell^\perp\otimes \ell^k.
\ene
Now consider the case where $v$ is isotropic. Then its stabilizer is a parabolic subgroup $P$ of $SO(W)$
and   as in the symplectic case, choosing a Levi subgroup $L$ of $P$ is equivalent to fixing an
orthogonal decomposition  $W=H\oplus (\ell\oplus\ell')$, so that $\ell^\perp=H\oplus\ell$. 
 {A natural approach would be to} try to use a branching formula from $SO(W)$ to $SO(H)\times SO(\ell\oplus\ell')$
to describe the kernel of $\phi_v$, and   to compare  {the result} with \eqref{kerphiv}.

\section{Spin  representations}\label{spinsect}
Let $\Delta_+$ and $\Delta_-$ denote the two half-spin representations of $Spin(2n)$. As before,
$W$ denotes the natural representation, of dimension $2n$. There is a natural 
map $$\phi: W\lra 
Hom(\Delta_+,\Delta_-)$$ and it is well-known that $\phi_v$ is an isomorphism when $v$ is not 
isotropic, while the rank $\phi_v$ is half the dimension of $\Delta_\pm$ when $v$ is isotropic. 
In fact the kernel and cokernel of $\phi_v$ give rise to the spinor bundles on the invariant quadric.

Instead of $\phi$,   consider the following     equivariant morphism arising from the same tensor
 $$\psi: \Delta_+\lra Hom(W,\Delta_-).$$
In order to understand this morphism more concretely, recall that 
the spin representations can be constructed by choosing a decomposition $W=E\oplus F$ into a direct
sum of maximal isotropic spaces. In particular $F$ is naturally identified with the dual of $E$. Then we can
respectively define 
$\Delta_+$ and $\Delta_-$ as the even and odd degree parts in the exterior algebra of $E$. 
The map $\phi$, and equivalently $\psi$, 
is then obtained by letting $E$ act by wedge product, and $F$ by contraction.

\begin{prop} Suppose $n=5$. Then for general  $\delta\in \Delta_+$, the kernel of $\psi_\delta$ is one dimensional.
More precisely, if we decompose  $\delta$ as $(\delta_0,\delta_2,\delta_4)$, where $\delta_k\in \wedge^kE$,
then 
$$Ker(\psi_\delta)=\CC (\delta_2\intprod \delta_4^*\op  (\delta_0\delta_4-\frac{1}{2}\delta_2\ww\d_2)^\#) {.}$$
\end{prop}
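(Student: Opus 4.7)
The plan is to choose a Lagrangian decomposition $W=E\oplus F$ with $F\cong E^\vee$, so that $\Delta_+=\bigoplus_{k\text{ even}}\wedge^k E$, $\Delta_-=\bigoplus_{k\text{ odd}}\wedge^k E$, and the Clifford action sends $e\in E$ to wedge product and $f\in F$ to contraction. Writing $\delta=\delta_0+\delta_2+\delta_4$ and $v=e+f$, the equation $\psi_\delta(v)=e\wedge\delta+f\intprod\delta=0$ splits, according to the three graded pieces of $\Delta_-=E\oplus\wedge^3E\oplus\wedge^5E$, into
\begin{align*}
e\wedge\delta_4 &= 0 \quad\text{in }\wedge^5E,\\
e\wedge\delta_2+f\intprod\delta_4 &= 0 \quad\text{in }\wedge^3E,\\
\delta_0 e+f\intprod\delta_2 &= 0 \quad\text{in }E.
\end{align*}

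The second step is to verify that $v_\ast=e_0\oplus f_0$ from the statement satisfies all three, where I interpret $\delta_4^\ast\in F$ and the isomorphism $\#\colon\wedge^4E\to F$ via a fixed volume form on $E$. The top equation holds because $e_0=\delta_2\intprod\delta_4^\ast$ lies in the $4$-plane $H=\ker\delta_4^\ast$, on which $\delta_4$ is a top form; equivalently $\delta_4^\ast\intprod\delta_4=0$. The middle equation reduces, via the graded Leibniz identity
$$\delta_4^\ast\intprod(\delta_2\wedge\delta_2)=2(\delta_4^\ast\intprod\delta_2)\wedge\delta_2,$$
to a sign compatibility check: under $\#$, contraction by $\delta_4$ on $F$ corresponds to contraction by $\delta_4^\ast$ on $\wedge^4 E$, so the $f_0\intprod\delta_4$ term cancels $e_0\wedge\delta_2$ once the factor $\tfrac12$ is tracked. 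The bottom equation is handled similarly, using a Leibniz expansion of $f_0\intprod\delta_2$ together with $\delta_4^\ast\intprod\delta_4=0$.

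The third step is to show the kernel has dimension exactly one for generic $\delta$. Since $v_\ast$ is polynomial in the coordinates of $\delta$, the locus $\{v_\ast(\delta)\neq 0\}$ is Zariski open, and it is nonempty because for instance at $\delta=1+e_1\wedge e_2\wedge e_3\wedge e_4$ the formula produces $v_\ast=f_5$, and a direct inspection of the $16\times 10$ matrix $\psi_\delta$ at this $\delta$ gives rank $9$ with kernel $\CC f_5$. Lower semicontinuity of the rank then furnishes an open neighbourhood of this test point on which $\rank\psi_\delta\ge 9$, while nonvanishing of $v_\ast$ forces $\rank\psi_\delta\le 9$ on its own nonempty open set; on the intersection, a nonempty Zariski open locus, the kernel is exactly $\CC v_\ast$.

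The principal obstacle will be sign bookkeeping in the second step: the graded Leibniz rule, the definition of $\delta_4^\ast$, the definition of $\#$, and the convention chosen for the Clifford action each introduce signs that must be tracked consistently for the cancellations in the middle and bottom equations to occur as stated. A conceptual reward for the calculation is the observation that $\delta_0\delta_4-\tfrac12\delta_2\wedge\delta_2\in\wedge^4 E$ is the top piece of the $Spin(10)$-equivariant quadratic map $\Delta_+\to\Delta_+^\vee$ (the norm form on pure spinors), which situates the explicit formula within the classical orbit geometry of $\Delta_+$.
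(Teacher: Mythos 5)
Your proof differs in structure from the paper's: you \emph{verify} that the claimed vector lies in the kernel and then pin down the kernel's dimension by a test point plus semicontinuity, whereas the paper \emph{solves} the system by elimination. Concretely, the paper uses the first equation $\delta_0 e + f\intprod\delta_2 = 0$ to express $e$ in terms of $f$ (assuming $\delta_0\neq 0$), substitutes into the second equation, invokes the Leibniz identity $f\intprod(\delta_2\wedge\delta_2)=2(f\intprod\delta_2)\wedge\delta_2$ to obtain $f\intprod(\delta_0\delta_4-\tfrac12\delta_2\wedge\delta_2)=0$, concludes that $f$ is determined up to scale, and then checks that the third equation is automatic. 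This hands you the one-dimensionality of the kernel directly, with no need for a test computation or a semicontinuity argument, and it produces the $F$-component of the kernel vector as a byproduct of the elimination rather than requiring you to guess the formula and confirm it.

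The genuine gap in your write-up is the second step. You state that ``the $f_0\intprod\delta_4$ term cancels $e_0\wedge\delta_2$ once the factor $\tfrac12$ is tracked'' and that ``the bottom equation is handled similarly,'' but these are the entire computational content of the proposition, and they are not carried out. In particular, confirming the stated $E$-component $e_0 = \delta_2\intprod\delta_4^\ast$ requires an additional identity beyond the Leibniz rule you cite: eliminating $e$ from the first equation gives $e_0 = -\delta_0^{-1}\bigl(f_0\intprod\delta_2\bigr)$, and matching this with $\delta_2\intprod\delta_4^\ast$ uses the vanishing $\bigl(\delta_2\wedge\delta_2\bigr)^\ast\intprod\delta_2 = 0$, which is true but not mentioned or proved in your sketch. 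Without making these sign-and-identity checks explicit, the argument is a plan rather than a proof; the paper's elimination route is the shorter way to close exactly this gap, since the single Leibniz identity suffices and the third equation then follows from the first two without any further bookkeeping. Your step three (the test point $\delta = 1 + e_1\wedge e_2\wedge e_3\wedge e_4$ giving kernel $\CC f_5$, followed by semicontinuity of rank) is correct and would complete the argument once step two is filled in, but it is work that the elimination approach makes unnecessary.
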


Here we identified $\wedge^4E$ with $E^\vee\otimes \det E$, and
$\delta^*$ is the image of $\delta$ under this identification.
  Similarly, using the quadratic form,  $\delta_0\delta_4-\frac{1}{2}\delta_2\ww\d_2$ can be 
considered as an element of $E^\vee\otimes \det E\simeq F\otimes \det E$ and we let $\d^\#$ denote
the image of $\d$ under this identification. We thus get a line in 
$(E\oplus F)\otimes \det E$, which is the same as a line in $W$. 

\proof  
A vector $v=e+f$ is in the kernel of $\psi_\delta$ when the following equations are satisfied:
$$
\delta_0 e+f\intprod\delta_2=0, \quad e\wedge\delta_2+f\intprod \delta_4=0, \quad e\wedge\delta_4=0.
$$
These equations respectively take values in $E$, $\wedge^3E$ and $\wedge^5E\simeq \CC$. 
For $\delta_0\ne 0$ the first equation determines $e$ as a function of $f$. Plugging this relation into 
the second equation, and using the identity $f.(\delta_2\ww \delta_2)=2(f.\delta_2)\wedge\delta_2$, we get the relation
$$
f\intprod (\delta_0\delta_4-\frac{1}{2}\delta_2\ww\d_2)=0.
$$
When $ \delta_0\delta_4-\frac{1}{2}\delta_2\ww\d_2$ is nonzero, such an equation determines $f$ up to a 
unique scalar. Indeed 
$$ \delta_0\delta_4-\frac{1}{2}\delta_2\ww\d_2\in \wedge^4E\isom
E^\vee\simeq F,
$$
 so it can 
be considered as an element $\delta_F$ of $F$, and $f$ must be a multiple of $\d_F$. 
We claim that the first two equations imply the last equation 
$(f\intprod \delta_2)\wedge \delta_4=0$. Indeed, since $\delta_2\wedge \delta_4=0$
for degree reasons, it is equivalent to $(f\intprod\delta_4)\wedge \delta_2=0$. But $f\intprod\delta_4$ is a multiple 
of $(f\intprod\delta_2)\ww \delta_2$ by the first two equations, so our equation reduces to  $(f\intprod\delta_2)\wedge \delta_2^{\ww 2}=0$, or equivalently 
$f\intprod \delta_2^{\ww 3}=0$ , which is trivially verified since $ \delta_2^{\ww 3}$ belongs to $\wedge^6E=0$. 
 \qed

\begin{prop}\label{delrc} $\psi(\Delta_+)$ is rank-critical. 
\end{prop}
 
\proof We proceed as for the proof of Proposition \ref{rank-critical},   applying  the results  of \cite{MR2268360}
and showing that for $ L=\psi(\Delta_+)$,  the space of  rank neutral directions
 $RND(L)$ coincides with $L$. This is particularly easy in this case because of the decomposition
$$ Hom(W,\Delta_-)=\Delta_+\oplus W_{\om_1+\om_4},
$$
where $W_{\om_1+\om_4}$ is the irreducible $SO(W)$-module of highest weight $\om_1+\om_4$ using
fundamental weight notation.
  Were $RND(L)$   strictly bigger than $L$, being a $G$-module it would have to be the whole 
$ Hom(W,\Delta_-)$, which is absurd. 
\qed 

\medskip
 To obtain an explicit matrix,  choose a basis $e_1,\ldots ,e_5$ of $E$ and decompose $\delta=\sum_{|I|\;even}\delta_Ie_I$. 
Then the matrix of $\psi_\delta$ has entry $\pm\delta_I$
on the row indexed $i$   and column indexed $I\cup\{1\}$ when
$1\notin I$, on the row indexed $i^*$   and column indexed $I-\{1\}$ when $1\in I$, and zeroes everywhere else. 
We let  $\theta_m=\pm\delta_{ijk\ell}$, with the  $\pm$   the sign of the permutation $mijkl$ of $12345$.  
This yields the following matrix 

$$M_\delta = \begin{pmatrix}
\delta_\emptyset &0&0&0&0&0&-\delta_{12}&-\delta_{13}&-\delta_{14}&-\delta_{15} \\ 
0&\delta_\emptyset &0&0&0&\delta_{12}&0&-\delta_{23}&-\delta_{24}&-\delta_{25} \\
0&0&\delta_\emptyset &0&0&\delta_{13}&\delta_{23}&0&-\delta_{34}&-\delta_{35} \\
0&0&0&\delta_\emptyset &0&\delta_{14}&\delta_{24}&\delta_{34}&0&-\delta_{45} \\
0&0&0&0&\delta_\emptyset &\delta_{15}&\delta_{25}&\delta_{35}&\delta_{45}&0 \\
\delta_{23}&-\delta_{13}&\delta_{12}&0&0&0&0&0&-\theta_5&\theta_4\\
\delta_{24}&-\delta_{14}&0&\delta_{12}&0&0&0&\theta_5&0&-\theta_3\\
\delta_{25}&-\delta_{15}&0&0&\delta_{12}&0&0&-\theta_4&\theta_3&0\\
\delta_{34}&0&-\delta_{14}&\delta_{13}&0&0&-\theta_5&0&0&\theta_2\\
\delta_{35}&0&-\delta_{15}&0&\delta_{13}&0&\theta_4&0&-\theta_2&0\\
\delta_{45}&0&0&-\delta_{15}&\delta_{14}&0&-\theta_3&\theta_2&0&0\\
0&\delta_{34}&-\delta_{24}&\delta_{23}&0&\theta_5&0&0&0&-\theta_1\\
0&\delta_{35}&-\delta_{25}&0&\delta_{23}&-\theta_4&0&0&\theta_1&0\\
0&\delta_{45}&0&-\delta_{25}&\delta_{24}&\theta_3&0&-\theta_1&0&0\\
0&0&\delta_{45}&-\delta_{35}&\delta_{34}&-\theta_2&\theta_1&0&0&0\\
\theta_1&\theta_2&\theta_3&\theta_4&\theta_5&0&0&0&0&0
\end{pmatrix}$$
The blocking is $(E,F)\times (\La 1 E, \La 3 E,\La 5 E)$.

The image of this matrix in $\CC^{10}$ is  the hyperplane
orthogonal to the vector $h=\sum (h_ie_i+h_i^\vee e_i^\vee)$ 
with 
$$h_i= \sum_{j>i} \delta_{ij}\theta_j-\sum_{j<i} \delta_{ij}\theta_j, \qquad
h_i^\vee = \delta_{\emptyset}\theta_i+\delta_{jk}\delta_{\ell m}-\delta_{j\ell}\delta_{km}+\delta_{jm}\delta_{k\ell}.$$

\medskip\noindent {\it Remark}. For $n=5$ there exists a unique equivariant morphism $$ a: Sym^2(\Delta_+)
\rightarrow W,$$ and the kernel of $\psi_\delta$ is generated by $a(\delta)$ when the latter is 
nonzero. The condition $a(\delta)=0$ is a collection of ten quadratic equations, 
which are the generators of the ideal of 
  the spinor variety $ \SS_{10}\subset  \PP^{15}$. 

From this perspective,   Proposition \ref{delrc} is no surprise if one observes that it is 
related with the minimal resolution of this spinor variety.    {This} minimal resolution was computed in 
\cite{weymanE6} and it has the following form: {
\begin{align*}0&\rightarrow \mathcal{O}_{\PP^{15}}(-8)\rightarrow \mathcal{O}_{\PP^{15}}(-6)^{\oplus 10} 
\rightarrow \mathcal{O}_{\PP^{15}}(-5)^{\oplus 16} \\
& \rightarrow \mathcal{O}_{\PP^{15}}(-3)^{\oplus 16} 
\rightarrow \mathcal{O}_{\PP^{15}}(-2)^{\oplus 10} 
\rightarrow \mathcal{O}_{\PP^{15}}\rightarrow \mathcal{O}_{\BS_{10}}\rightarrow 0.
\end{align*}
}
This shows that the $10\times 16$ matrix $\psi_v$ of linear forms can be interpreted as the matrix of linear syzygies 
between the ten quadrics. (The middle matrix of quadrics is 
also interesting.)

\begin{question}
Do the the larger spinor varieties   have property $N_2$ (meaning that the syzygies between their 
quadratic equations are only linear)?   
\end{question}

What is known is that 
these varieties,  {like all homogeneous varieties,
have ideal generated in degree two}
  {and,}   as in  the case for Grassmannians, the space
of quadratic equations is an irreducible module only in small dimensions. 

\smallskip
For $\SS_{12}\subset \PP^{31}$, the space of quadratic equations is isomorphic with $\fso_{12}\simeq \wedge^2W$, where $W=W_{\omega_1}$ is the natural representation. The space of linear syzygies between these quadrics is the irreducible 
module $W_{\omega_1+\omega_5}$. In particular 
the natural equivariant map $\Delta_+\lra Hom(W_{\omega_1+\omega_5}, \wedge^2W)$ yields a  {$32$-dimensional
space of $352\times 66$-matrices of
bounded rank}.

\smallskip
For $\SS_{14}\subset  \PP^{63}$, the space of quadratic equations is still irreducible, being isomorphic with 
$W_{\omega_3}=\wedge^3W$. The space of linear syzygies between these quadrics is reducible, being isomorphic  with 
$U=\Delta_-\oplus W_{\omega_2+\omega_7}$. In particular 
the natural equivariant map $\Delta_+\lra Hom(U, \wedge^3W)$ yields a $4992\times 364$-matrix of
bounded rank of linear forms in $64$-variables.
\smallskip

\begin{question}
For $n>5$, is the morphism $\psi :\Delta_+\longrightarrow Hom(W,\Delta_-)$  of bounded rank?  
\end{question}

\medskip
In general, the spinor variety $\SS_{2n}\subset \PP(\Delta_+)$ is cut-out by a space of quadratic equations that contains 
 $\wedge^{n-4}W$, with multiplicity one \cite{MR2529169}. Hence there is a unique (up to scale) equivariant map  
 $$
 a: Sym^2(\Delta_+)\lra \wedge^{n-4}W.
 $$ 
 
 \begin{prop}
 There exists a unique (up to scale) nonzero equivariant morphism 
 $$\psi : \Delta_+ \ra Hom(W_{\omega_{n-4}}, W_{\omega_{n-5}+\omega_{n-1}}),$$
and this morphism yields a matrix of linear forms of bounded rank.  

Indeed, for any $\delta\in\Delta_+$
we have $$\psi_\delta(a(\delta))=0.$$
 \end{prop}
 
 \proof Recall that $W_{\omega_i}=\wedge^iW$ for $i\le n-2$, and the remaining two 
fundamental representations are  $\Delta_+=W_{\omega_{n}}$ and $ \Delta_-=W_{\omega_{n-1}}$. 
We have also seen that there exist  equivariant morphisms $W\otimes \Delta_\pm\lra \Delta_\mp$.
By duality, we get morphisms $\Delta_\mp\lra \Delta_\mp\otimes W$ (the half-spin representations are
either self-dual, or dual one of the other according to the parity of $n$, but this does not affect 
our conclusion). By \cite[Proposition 3]{MR2529169} there exists a unique component of $\Delta_+\otimes 
W_{\omega_{n-4}}$ isomorphic to $W_{\omega_{n-5}+\omega_{n-1}}$, and this yields the morphism $\psi$. 
Combining it with $a$ we get an equivariant morphism 
$$Sym^3\Delta_+ \lra W_{\omega_{n-5}+\omega_{n-1}}.$$
But according to \cite[Theorem 2]{MR2529169} there is no such morphism! Hence the formula 
$\psi_\delta(a(\delta))=0$ for any $\delta\in\Delta_+$, and consequently $\psi_\delta$ has 
a nontrivial kernel.\qed 


\medskip The matrices we obtain depend on $2^{n-1}$ parameters, and their size $a_n\times b_n$ is also huge.
The Weyl  dimension formula may be used to show:
$$a_n=\binom{2n}{n-4}\simeq \alpha \frac{2^{2n}}{n^{3/2}}, \qquad b_n\simeq \beta \frac{2^{6n}}{n^{31/2}}$$
for some positive constants $\alpha, \beta$. 

\begin{remark} It would be interesting to decide whether the unexpected kernel of this example is one-dimensional,
or bigger.
\end{remark}

\begin{question} The method
 {of using the fact that an equivariant morphism with certain constraints must be zero 
to force another morphism to have non-trivial kernel when it is not expected to}
  seems rather robust, 
 {it} only relies on the vanishing of certain multiplicities 
in tensor products or plethysms. Can it be used  to exhibit other matrices of bounded rank?
\end{question}


\medskip  {The} connection with syzygies is  {not} surprising, since syzygies were already identified 
in \cite{MR954659} as a wide source of examples of spaces of matrices of bounded, or even constant rank. 
We plan to explore this topic  {further.}
  
    \bibliographystyle{amsplain}

\bibliography{Lmatrix}

\def\cdprime{$''$} \def\cprime{$'$} \def\cprime{$'$} \def\cprime{$'$}
  \def\Dbar{\leavevmode\lower.6ex\hbox to 0pt{\hskip-.23ex \accent"16\hss}D}
  \def\cprime{$'$} \def\cprime{$'$} \def\cdprime{$''$} \def\cprime{$'$}
  \def\cprime{$'$} \def\Dbar{\leavevmode\lower.6ex\hbox to 0pt{\hskip-.23ex
  \accent"16\hss}D} \def\cprime{$'$} \def\cprime{$'$} \def\cprime{$'$}
  \def\cprime{$'$} \def\Dbar{\leavevmode\lower.6ex\hbox to 0pt{\hskip-.23ex
  \accent"16\hss}D} \def\cprime{$'$} \def\cprime{$'$}
\providecommand{\bysame}{\leavevmode\hbox to3em{\hrulefill}\thinspace}
\providecommand{\MR}{\relax\ifhmode\unskip\space\fi MR }
\providecommand{\MRhref}[2]{%
  \href{http://www.ams.org/mathscinet-getitem?mr=#1}{#2}
}
\providecommand{\href}[2]{#2}
\begin{thebibliography}{10}

\bibitem{MR587090}
M.~D. Atkinson and S.~Lloyd, \emph{Large spaces of matrices of bounded rank},
  Quart. J. Math. Oxford Ser. (2) \textbf{31} (1980), no.~123, 253--262.
  \MR{587090}

\bibitem{MR1240599}
I.~Dolgachev and M.~Kapranov, \emph{Arrangements of hyperplanes and vector
  bundles on {$\mathbf{P}^n$}}, Duke Math. J. \textbf{71} (1993), no.~3,
  633--664. \MR{1240599}

\bibitem{MR2268360}
Jan Draisma, \emph{Small maximal spaces of non-invertible matrices}, Bull.
  London Math. Soc. \textbf{38} (2006), no.~5, 764--776. \MR{2268360
  (2007j:17010)}

\bibitem{MR142592}
J.~A. Eagon and D.~G. Northcott, \emph{Ideals defined by matrices and a certain
  complex associated with them}, Proc. Roy. Soc. London Ser. A \textbf{269}
  (1962), 188--204. \MR{142592}

\bibitem{MR954659}
David Eisenbud and Joe Harris, \emph{Vector spaces of matrices of low rank},
  Adv. in Math. \textbf{70} (1988), no.~2, 135--155. \MR{MR954659 (89j:14010)}

\bibitem{MR3536970}
Ph. Ellia and P.~Menegatti, \emph{Spaces of matrices of constant rank and
  uniform vector bundles}, Linear Algebra Appl. \textbf{507} (2016), 474--485.
  \MR{3536970}

\bibitem{MR2348285}
Daniele Faenzi, \emph{Homogeneous instanton bundles on {$\mathbb{P}^3$} for the
  action of {${\rm SL}(2)$}}, J. Geom. Phys. \textbf{57} (2007), no.~10,
  2146--2157. \MR{2348285}

\bibitem{MR411400}
R.~C. King, \emph{Branching rules for classical {L}ie groups using tensor and
  spinor methods}, J. Phys. A \textbf{8} (1975), 429--449. \MR{411400}

\bibitem{weymanE6}
W.~Kraskiewicz and J.~Weyman, \emph{Geometry of orbit closures for the
  representations associated to gradings of {L}ie algebras of types ${E}_6$,
  ${F}_4$ and ${G}_2$}, arXiv:1201.1102 (2012).

\bibitem{MR3376667}
Joseph~M. Landsberg and Giorgio Ottaviani, \emph{New lower bounds for the
  border rank of matrix multiplication}, Theory Comput. \textbf{11} (2015),
  285--298. \MR{3376667}

\bibitem{MR2529169}
Laurent Manivel, \emph{On spinor varieties and their secants}, SIGMA Symmetry
  Integrability Geom. Methods Appl. \textbf{5} (2009), Paper 078, 22.
  \MR{2529169}

\bibitem{oss}
Christian Okonek, Michael Schneider, and Heinz Spindler, \emph{Vector bundles
  on complex projective spaces}, Progress in Mathematics, vol.~3, Birkh\"auser
  Boston, Mass., 1980. \MR{81b:14001}

\bibitem{MR1104341}
R.~F. Rohmfeld, \emph{Stability of homogeneous vector bundles on {${\bf C}{\rm
  P}_n$}}, Geom. Dedicata \textbf{38} (1991), no.~2, 159--166. \MR{1104341}

\bibitem{MR878293}
R.~Westwick, \emph{Spaces of matrices of fixed rank}, Linear and Multilinear
  Algebra \textbf{20} (1987), no.~2, 171--174. \MR{878293 (87m:15003)}

\bibitem{weyman}
Jerzy Weyman, \emph{Cohomology of vector bundles and syzygies}, Cambridge
  Tracts in Mathematics, vol. 149, Cambridge University Press, Cambridge, 2003.
  \MR{MR1988690 (2004d:13020)}

\end{thebibliography}

\end{document}